\numberwithin{equation}{section}
\theoremstyle{plain}
\newtheorem{theorem}{Theorem}[section]
\newtheorem{lemma}[theorem]{Lemma}
\newtheorem{corollary}[theorem]{Corollary}
\theoremstyle{definition}
\newtheorem{definition}[theorem]{Definition}
\theoremstyle{remark}
\newtheorem{remark}[theorem]{Remark}
\newtheorem{notation}{Notation}[section]
\def\dashint{\operatorname%
{\,\,\text{\bf--}\kern-.98em\DOTSI\intop\ilimits@\!\!}}
\def\bR{\mathbb{R}}
\def\cA{\mathcal{A}}
\def\cL{\mathcal{L}}
\def\cG{\mathcal{G}}
\def\cF{\mathcal{F}}
\def\cU{\mathcal{U}}
\begin{document}
\title[Stokes system]{Gradient estimates for Stokes systems in domains}

\author[J. Choi]{Jongkeun Choi}
\address[J. Choi]{Division of Applied Mathematics, Brown University, 182 George Street, Providence, RI 02912, USA}
\email{Jongkeun\_Choi@brown.edu}

\thanks{J. Choi was supported by Basic Science Research Program
through the National Research Foundation of Korea (NRF) funded by the Ministry of Education
(2017R1A6A3A03005168)}

\author[H. Dong]{Hongjie Dong}
\address[H. Dong]{Division of Applied Mathematics, Brown University, 182 George Street, Providence, RI 02912, USA}

\email{Hongjie\_Dong@brown.edu}

\thanks{H. Dong was partially supported by the NSF under agreement DMS-1600593.}

\subjclass[2010]{76D07, 35B65, 35J57}
\keywords{Stokes system, Dini mean oscillation condition, $C^1$-estimate, weak type-$(1,1)$ estimate}

\begin{abstract}
We study the stationary Stokes system with Dini mean oscillation coefficients in a domain having $C^{1,\rm{Dini}}$ boundary.
We prove that if $(u, p)$ is a weak solution of the system with zero Dirichlet boundary condition, then $(Du,p)$ is continuous up to the boundary.
We also prove a weak type-$(1,1)$ estimate for $(Du, p)$.
\end{abstract}

\maketitle

\section{Introduction and main results}		\label{Sec1}

We consider the stationary Stokes system with variable coefficients
\begin{equation}		\label{171230@A1}
\left\{
\begin{aligned}
\cL u+\nabla p=D_\alpha f_\alpha &\quad \text{in }\, \Omega,\\
\operatorname{div}u=g &\quad \text{in }\, \Omega,
\end{aligned}
\right.
\end{equation}
where $\Omega$ is a bounded domain in $\bR^d$, $d\ge 2$.
The differential operator $\cL$ is in divergence form acting on column vector-valued functions $u=(u^1,\ldots,u^d)^\top$ as follows:
$$
\cL u=D_\alpha (A^{\alpha\beta}D_\beta u),
$$
where the coefficients $ A^{\alpha\beta}= A^{\alpha\beta}(x)$ are $d\times d$ matrix-valued functions on $\Omega$, which satisfy the strong ellipticity condition,
i.e., there is a constant $\lambda\in (0,1]$ such that for any $x\in \bR^d$ and $ \xi_\alpha \in \bR^d$, $\alpha\in \{1,\ldots,d\}$, we have
$$
|A^{\alpha\beta}(x)|\le \lambda^{-1}, \quad \sum_{\alpha,\beta=1}^dA^{\alpha\beta}(x)\xi_\beta\cdot \xi_\alpha\ge \lambda \sum_{\alpha=1}^d|\xi_\alpha|^2.
$$

In a recent paper \cite{arXiv:1803.05560}, we investigated minimal regularity assumptions on coefficients and data for $W^{1,\infty}$ and $C^1$ regularity of weak solutions to the Stokes system
in a ball and a half ball.
One of the results in \cite{arXiv:1803.05560} is that every weak solutions of \eqref{171230@A1} satisfy
$$
(u,p)\in C^1(\Omega')^d\times C(\Omega'), \quad \Omega' \Subset \Omega
$$
provided that the coefficients and data are of {\emph{Dini mean oscillation}}.
We say that a function is of Dini mean oscillation if its $L^1$-mean oscillation satisfies the Dini condition; see Definition \ref{D2} for more precise definition.
This class of functions was first introduced by Dong-Kim in \cite{MR3620893} for $C^1$ and $C^2$ regularity of solutions to elliptic equations in divergence and nondivergence form.
A local weak type-$(1,1)$ estimate for $(Du, p)$ was also proved in \cite{arXiv:1803.05560}.

In this paper, we extend the aforementioned results in \cite{arXiv:1803.05560} to domains up to the boundary.
More precisely, we prove that weak solutions of the Stokes system \eqref{171230@A1} with zero Dirichlet boundary condition satisfy
\begin{equation}		\label{180103@A1}
(u,p)\in C^1(\overline{\Omega})^d\times C(\overline{\Omega})
\end{equation}
provided that the coefficients and data are of Dini mean oscillation, and that $\Omega$ has $C^{1,\rm{Dini}}$ boundary.
As an application, we obtain Schauder estimate and regularity for weak solutions, which were studied in \cite[Theorem 1.3, p. 198]{MR0641818}.
We also prove the global weak type-$(1,1)$ estimate for $(Du, p)$ under a stronger assumption on the coefficients and the boundary.

Our argument in establishing \eqref{180103@A1} is based on the approach used in \cite{MR3747493}, where the authors proved boundary $C^1$-estimates for divergence type elliptic equations
$$
D_i (a^{ij}D_j u)=\operatorname{div}f
$$
with Dini mean oscillation coefficients on a domain having $C^{1,\rm{Dini}}$ boundary.
The key ingredient is $L^q$-mean oscillation estimates with $q\in (0,1)$ for derivatives of solutions on the boundary.
In \cite{MR3747493}, such mean oscillation estimates were obtained near a flat boundary and then the boundary $C^1$-estimate follows from that on the half ball since the mapping of flattening boundary preserves the regularity assumptions on the coefficients and data.
However, this argument does not work for the Stokes system because after the mapping the pressure term and the divergence equation give rise to extra terms which are {\em not} of Dini mean oscillation.
In this paper, we establish the $L^q$-mean oscillation estimate near curved boundary.
To this end, we fix a point  $x_0=(x_{01},x_0')\in \partial \Omega$  and a coordinate system so that the $C^{1,\rm{Dini}}$ function $\chi$ defining $\partial\Omega$ near $x_0$ satisfies $|\nabla_{x'}\chi(x_0')|=0$.
Then, in this coordinate system, we employ the mapping of flattening boundary to control the $L^q$-mean oscillation at $x_0$.
Therefore, our mean oscillation estimate at the boundary point $x_0$ depends on the coordinate system and the $C^{1,\rm{Dini}}$ function $\chi$ associated with $x_0$; see Lemma \ref{171101@lem1}.
This makes the arguments much more involved.

The remainder of the paper is organized as follows.
In the rest of this section, we state our main results along with some definitions and assumptions.
In Section \ref{Sec2}, we provide the proofs of the main theorems.
In Appendix, we provide the proofs of some lemmas used in the paper.

For any $x\in \overline{\Omega}$ and $r>0$, we denote $\Omega_r(x)=\Omega\cap B_r(x)$, where $B_r(x)$ is a usual Euclidean ball of radius $r$ centered at $x$.
We denote $B_r^+(x)=B_r(x)\cap \bR^d_+$, where
$$
\bR^d_+=\{x=(x_1,x')\in \bR^d:x_1>0, \, x'\in \bR^{d-1}\}.
$$
For $0 < q \le \infty$, let $L^q(\Omega)$ be the space consisting of measurable functions on $\Omega$ that are $q$-th
integrable.
We define
$$
\tilde{L}^q(\Omega)=\{f\in L^q(\Omega): (f)_\Omega=0\},
$$
where $(f)_\Omega$ is the average of $f$ over $\Omega$, i.e.,
$$
(f)_\Omega=\dashint_\Omega f\,dx=\frac{1}{|\Omega|}\int_\Omega f\,dx.
$$
For $1\le q\le \infty$, we denote by $W^{1,q}(\Omega)$ the usual Sobolev space and by $W^{1,q}_0(\Omega)$ the completion of $C^\infty_0(\Omega)$ in $W^{1,q}(\Omega)$.
We define the H\"older semi-norm by
$$
[f]_{C^{\gamma}(\Omega)}:=\sup_{\substack{x,y\in \Omega \\ x\neq y}} \frac{|f(x)-f(y)|}{|x-y|^\gamma}, \quad 0<\gamma<1.
$$
We say that a measurable function $\omega:(0,a]\to [0,\infty)$ is a Dini function
provided that there are constants $c_1, c_2>0$ such that
\begin{equation}		\label{171006@eq1}
c_1\omega(t)\le \omega(s)\le c_2\omega(t)		\quad \text{whenever }\, \frac{t}{2}\le s\le t\le a
\end{equation}
and  that $\omega$ satisfies the Dini condition
\begin{equation}		\label{180315@A1}
\int_0^{a} \frac{\omega (t)}{t} \,dt<\infty.
\end{equation}

\begin{definition}		\label{D2}
Let $f\in L^1(\Omega)$.
\begin{enumerate}[$(i)$]
\item
We say that $f$ is {\em{uniformly Dini continuous}} in $\Omega$ if the function $\varrho_{f}:(0,1] \to [0,\infty)$ defined by
$$
\varrho_{f}(r):=\sup_{x_0\in \Omega} \sup_{x,y\in \Omega_r(x_0)}|f(x)-f(y)|
$$
is a Dini function.
\item
We say that $f$ is of {\em{Dini mean oscillation}} in $\Omega$ if the function $\omega_{f}:(0, 1]\to [0,\infty)$ defined by
$$
\omega_{f}(r):=\sup_{x\in \overline{\Omega}}\dashint_{\Omega_r(x)} \big|f-(f)_{\Omega_r(x)}\big|\,dy
$$
satisfies the Dini condition
$$
\int_0^{1} \frac{\omega_{f}(t)}{t}\,dt<\infty.
$$

\end{enumerate}
\end{definition}

\begin{remark}		\label{171020@rmk1}
Assume that $|\Omega_r(x)|\ge A_0 r^d$ for all $x\in \overline{\Omega}$ and $0<r\le 1$.
If $f$ is of Dini mean oscillation in $\Omega$, then $f$ is uniformly continuous in $\Omega$ with its modulus of continuity controlled by $\omega_f$.
Moreover, since $\omega_{f}$ satisfies the condition \eqref{171006@eq1} with $(c_1,c_2)=(c_1,c_2)(d,A_0)$ (see, for instance, \cite[p. 495]{MR3615500}), we have that $\omega_{f}:(0,1]\to [0,\infty)$ is a Dini function.
\end{remark}

\begin{definition}		\label{D3}
Let $\Omega$ be a domain in $\bR^d$.
We say that $\Omega$ has $C^{1, \rm{Dini}}$ boundary if there exist a constant $R_0\in (0,1]$ and a Dini function $\varrho_0:(0, 1]\to [0,\infty)$ such that the following holds:
For any $x_0=(x_{01},x_0')\in \partial \Omega$, there exists a $C^{1,\rm{Dini}}$ function (i.e., $C^1$ function whose derivatives are uniformly Dini continuous) $\chi:\bR^{d-1}\to \bR$ and a coordinate system depending on $x_0$ such that
\begin{equation}		\label{171101@E1}
\varrho_{\nabla_{x'}\chi}(r)\le \varrho_0(r) \quad \text{for all }\, r\in (0,R_0),
\end{equation}
and that in the new coordinate system, we have
\begin{equation}		\label{171101@E2}
|\nabla_{x'}\chi(x_0')|=0, \quad \Omega_{R_0}(x_0)=\{x\in B_{R_0}(x_0): x_1>\chi(x')\}.
\end{equation}
\end{definition}

Now, we state our main theorems.

\begin{theorem}		\label{M4}
Let $\Omega$ be a bounded domain in $\bR^d$ having $C^{1,\rm{Dini}}$ boundary.
Assume that $(u,p)\in W^{1,2}_0(\Omega)^d\times \tilde{L}^2(\Omega)$ is the weak solution of
\begin{equation}		\label{171006@eq2}
\left\{
\begin{aligned}
\cL u+\nabla p=D_\alpha f_\alpha \quad \text{in }\, \Omega,\\
\operatorname{div} u=g-(g)_\Omega \quad \text{in }\, \Omega,
\end{aligned}
\right.
\end{equation}
where $f_\alpha\in L^{2}(\Omega)^d$ and $g\in L^2(\Omega)$.
\begin{enumerate}[$(a)$]
\item
If $A^{\alpha\beta}$, $f_\alpha$, and $g$ are of Dini mean oscillation in $\Omega$, then we have
$$
(u,p)\in C^1(\overline{\Omega})^d\times C(\overline{\Omega}).
$$
\item
Let $0<\gamma_0<1$ and $\partial \Omega$ be $C^{1,\gamma_0}$, i.e., $\varrho_0 (r) = N r^{\gamma_0}$ for some constant $N>0$.
If it holds that $[A^{\alpha\beta}]_{C^{\gamma_0}(\Omega)}+[f_\alpha]_{C^{\gamma_0}(\Omega)}+[g]_{C^{\gamma_0}(\Omega)}<\infty$,
then we have
$$
(u,p)\in C^{1,\gamma_0}(\overline{\Omega})^d\times C^{\gamma_0}(\overline{\Omega}).
$$
\end{enumerate}
\end{theorem}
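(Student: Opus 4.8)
The plan is to reduce the theorem to a quantitative decay estimate for the $L^q$-mean oscillation (with a fixed $q\in(0,1)$) of $(Du,p)$ at points of $\overline{\Omega}$, and then to run a Campanato-type summation. Since interior $C^1$-regularity of $(u,p)$ is already available from \cite{arXiv:1803.05560}, the whole issue is the behaviour near $\partial\Omega$. Fix $x_0\in\partial\Omega$ and pass to the coordinate system provided by Definition \ref{D3}, in which the defining function $\chi$ satisfies $\nabla_{x'}\chi(x_0')=0$ together with $\varrho_{\nabla_{x'}\chi}\le\varrho_0$ by \eqref{171101@E1}--\eqref{171101@E2}. Flatten the boundary through $y=\Phi(x)$, $y_1=x_1-\chi(x')$, $y'=x'$, which carries $\Omega_{R_0}(x_0)$ into a subset of a half ball and transforms \eqref{171006@eq2} into a Stokes-type system $\tilde{\cL}\tilde u+\nabla\tilde p=D_\alpha\tilde f_\alpha$, $\operatorname{div}\tilde u=\tilde g+(\text{lower-order terms involving }\nabla\chi)$, with transformed coefficients $\tilde A^{\alpha\beta}$ still of Dini mean oscillation, since $\nabla_{x'}\chi$ is uniformly Dini continuous and compositions and products of Dini-mean-oscillation functions with Dini-continuous functions stay in the class.

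The core difficulty, flagged in the introduction, is that $\Phi$ produces error terms of the form $D_\beta\chi\,\tilde p$ from the pressure gradient and $\nabla\chi\cdot D\tilde u$ from the divergence equation; as $\nabla\chi$ is only Dini continuous and $\tilde p$ only continuous, these products need not be of Dini mean oscillation, so the half-space result of \cite{arXiv:1803.05560} cannot be invoked verbatim. The way around this is that the estimate is only needed at the single point $x_0$: because $\nabla_{x'}\chi(x_0')=0$, one has $|\nabla_{x'}\chi|\le\varrho_0(r)$ on $B_r(x_0')$, so each error term carries a small prefactor that is itself a Dini function of the scale. Concretely, on each half ball $B_r^+$ I would split $(\tilde u,\tilde p)=(v,q_v)+(w,q_w)$, where $(v,q_v)$ solves the Stokes system with coefficients frozen at $x_0$ and with the regularized (averaged) part of the data, apply the classical $C^{1,\sigma}$/Lipschitz estimate for the constant-coefficient Stokes system to $(v,q_v)$, and bound $(Dw,q_w)$ in $L^q$ via the weak type-$(1,1)$ solvability of the Stokes system --- this is exactly why $q<1$ is forced, since the error terms lie only in $L^1$ and one passes from weak-$L^1$ to $L^q$ for $q<1$. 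Summing the oscillations of the coefficients, of $\tilde f_\alpha$, of $\tilde g$, and of $\nabla\chi$ at scale $r$, together with a geometrically decaying contribution from $(v,q_v)$, yields a modulus $\tilde\omega(r)\lesssim\omega_{A}(r)+\omega_{f}(r)+\omega_{g}(r)+\varrho_0(r)+(\text{decaying term})$; this is the content of Lemma \ref{171101@lem1}.

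Granting Lemma \ref{171101@lem1} and its interior counterpart, part $(a)$ follows by the standard argument: the decay of the $L^q$-mean oscillation of $Du$ and $p$ at each point of $\overline{\Omega}$ --- with the non-flat geometry absorbed into the coordinate-dependent statement of the lemma --- combined with finiteness of the Dini integral $\int_0^1\tilde\omega(t)/t\,dt$, forces $Du$ and $p$ to have limits at every point and to be uniformly continuous on $\overline{\Omega}$, so $u\in C^1(\overline{\Omega})^d$ and $p\in C(\overline{\Omega})$. For part $(b)$, when $\varrho_0(r)=Nr^{\gamma_0}$ and the coefficients and data have finite $C^{\gamma_0}$ seminorms, one has $\tilde\omega(r)\lesssim r^{\gamma_0}$, and the Campanato iteration then gives $\dashint_{\Omega_r(x_0)}|Du-(Du)_{\Omega_r(x_0)}|\lesssim r^{\gamma_0}$ uniformly in $x_0$ and $r$; by Campanato's characterization this yields $Du,p\in C^{\gamma_0}(\overline{\Omega})$, i.e. $(u,p)\in C^{1,\gamma_0}(\overline{\Omega})^d\times C^{\gamma_0}(\overline{\Omega})$. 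The main obstacle is genuinely the curved-boundary mean oscillation estimate: keeping track of how the coordinate frame and the function $\chi$ attached to $x_0$ enter every bound, and checking that each term generated by flattening is dominated by a Dini function of the scale --- especially the interplay between the smallness $|\nabla\chi|\lesssim\varrho_0(r)$ and the mere continuity of $\tilde p$.
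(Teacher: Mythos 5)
Your proposal is correct and follows essentially the same route as the paper: flatten the boundary in the special coordinate system where $\nabla_{x'}\chi(x_0')=0$, split off a constant-coefficient Stokes solution, control the remainder (which carries the $\pi b$ and $D_1 v\cdot b$ errors with $|b|\lesssim\varrho_0(r)$) via the half-ball weak type-$(1,1)$ estimate in $L^q$ with $q<1$, and close with the Campanato/Dini iteration. The one technical point you gloss over is that the boundary oscillation quantity must be taken for the adapted combinations $D_1u$ and $D_i\chi\, D_1u+D_iu$ (the paper's $\Psi$), with a separate lemma converting this back to the oscillation of $(Du,p)$ at interior points near the boundary; but the overall structure matches the paper's.
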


\begin{remark}		
By the same reasoning as \cite[Remark 2.4]{arXiv:1803.05560},
one can extend the results in Theorem \ref{M4} to the solution of the system
$$
\left\{
\begin{aligned}
\cL u+\nabla p=f+D_\alpha f_\alpha \quad \text{in }\, \Omega,\\
\operatorname{div} u=g-(g)_\Omega \quad \text{in }\, \Omega,
\end{aligned}
\right.
$$
where $f\in L^q(\Omega)^d$ with $q>d$.
\end{remark}

In the next theorem, we prove the global weak type-$(1,1)$ estimate for $Du$ and $p$.

\begin{theorem}		\label{M5}
Let $\Omega$ be a bounded domain in $\bR^d$ having $C^{1,\rm{Dini}}$ boundary.
Assume that $(u, p)\in W^{1,q}_0(\Omega)^d\times \tilde{L}^q(\Omega)$ is the weak solution of \eqref{171006@eq2}, where $f_\alpha\in L^q(\Omega)^d$, $g\in L^q(\Omega)$, and $q\in (1,\infty)$.
If $A^{\alpha\beta}$ are of Dini mean oscillation in $\Omega$ and
\begin{equation}		\label{171127@B1}
\varrho_0(r)+\omega_{A^{\alpha\beta}}(r)\le C_0 (\ln r)^{-2}, \quad \forall r\in (0,1/2),
\end{equation}
then for any $t>0$, we have
\begin{equation}		\label{180315@eq2}
\big|\{x\in \Omega:|Du(x)|+|p(x)|>t\}\big|\le \frac{C}{t}\int_\Omega (|f_\alpha|+|g|)\,dx,
\end{equation}
where the constant $C$ depends only on $d$, $\lambda$, $\Omega$, $R_0$, $\varrho_0$, $\omega_{A^{\alpha\beta}}$,  and $C_0$.
\end{theorem}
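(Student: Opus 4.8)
The strategy is a Calderón--Zygmund decomposition combined with the up-to-the-boundary $L^q$-mean oscillation estimates ($q\in(0,1)$) built earlier in the paper --- in particular the curved-boundary estimate of Lemma~\ref{171101@lem1} --- following the scheme of the local weak type-$(1,1)$ bound in \cite{arXiv:1803.05560}. By the scaling $\Omega\mapsto\rho\Omega$ (which changes $R_0,\varrho_0,\omega_{A^{\alpha\beta}},C_0$ in a controlled way) together with the linearity and scale invariance of \eqref{180315@eq2}, we may assume $\operatorname{diam}\Omega$ is as small as we wish and, the case of vanishing right-hand side being trivial, that $\sum_\alpha\|f_\alpha\|_{L^1(\Omega)}+\|g\|_{L^1(\Omega)}=1$; it then suffices to prove $|\{|Du|+|p|>t\}|\le C/t$, and since $|\{\cdots>t\}|\le|\Omega|$ this is clear once $t$ is below a threshold depending only on $\Omega$, so fix $t$ large. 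As a bounded $C^{1,\mathrm{Dini}}$ domain is Ahlfors regular, hence a doubling metric measure space, there is a dyadic Calderón--Zygmund decomposition of $F:=\sum_\alpha|f_\alpha|+|g|$ at height $t$: pairwise disjoint ``cubes'' $Q_j\subset\Omega$, with concentric balls $B_j=B_{r_j}(x_j)\supset Q_j$ and $\operatorname{diam}Q_j\approx r_j$, such that $t<\dashint_{Q_j}F\le Nt$, $F\le t$ a.e.\ on $\Omega\setminus\bigcup_jQ_j$, and $\sum_j|Q_j|\le t^{-1}$.

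Split the data into good and bad parts $f_\alpha=\bar f_\alpha+\hat f_\alpha$, $g=\bar g+\hat g$, where $\bar f_\alpha=f_\alpha$ on $\Omega\setminus\bigcup_jQ_j$ and $\bar f_\alpha=(f_\alpha)_{Q_j}$ on $Q_j$ (and similarly for $g$), so that $\hat f_\alpha$, $\hat g$ have zero mean on every $Q_j$, hence on $\Omega$. By uniqueness of the $W^{1,2}_0\times\tilde L^2$ solution, $(u,p)=(\bar u,\bar p)+(\hat u,\hat p)$, where these solve \eqref{171006@eq2} with right-hand sides $(D_\alpha\bar f_\alpha,\ \bar g-(g)_\Omega)$ and $(D_\alpha\hat f_\alpha,\ \hat g)$, both divergence data having zero mean over $\Omega$. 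For the good part, $\|\bar f_\alpha\|_{L^\infty}+\|\bar g\|_{L^\infty}\lesssim t$ and $\|\bar f_\alpha\|_{L^1}+\|\bar g\|_{L^1}\lesssim1$ give $\|\bar f_\alpha\|_{L^2}^2+\|\bar g-(g)_\Omega\|_{L^2}^2\lesssim t$; the $W^{1,2}$-solvability of the Stokes system on a Lipschitz domain (only boundedness and ellipticity of $A^{\alpha\beta}$ are used here) yields $\|D\bar u\|_{L^2}+\|\bar p\|_{L^2}\lesssim t^{1/2}$, so by Chebyshev $|\{|D\bar u|+|\bar p|>t/2\}|\lesssim t^{-1}$.

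For the bad part, enlarge the cubes to $B_j^{*}=B_{\kappa r_j}(x_j)\cap\Omega$ with $\kappa$ a fixed large constant; then $\big|\bigcup_jB_j^{*}\big|\lesssim\sum_j|Q_j|\lesssim t^{-1}$, so it is enough to bound $(D\hat u,\hat p)$ on $\Omega\setminus\bigcup_jB_j^{*}$, and for this I would prove the $L^1$ estimate
\[
\int_{\Omega\setminus\bigcup_jB_j^{*}}\big(|D\hat u|+|\hat p|\big)\,dx\ \le\ C\sum_j\Big(\|\hat f_\alpha\mathbf{1}_{Q_j}\|_{L^1}+\|\hat g\,\mathbf{1}_{Q_j}\|_{L^1}\Big)\ \le\ C ,
\]
which, with Chebyshev and $\big|\bigcup_jB_j^{*}\big|\lesssim t^{-1}$, gives $|\{|D\hat u|+|\hat p|>t/2\}|\lesssim t^{-1}$ and hence the theorem. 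Writing $\hat u=\sum_j\hat u_j$ for the solutions with data supported in the single cube $Q_j$ (each with zero-mean divergence data, hence solvable), one has $\|\hat f_\alpha\mathbf{1}_{Q_j}\|_{L^1}+\|\hat g\,\mathbf{1}_{Q_j}\|_{L^1}\le 2\int_{Q_j}F\lesssim t|Q_j|$, so it remains to show $\int_{\Omega\setminus B_j^{*}}(|D\hat u_j|+|\hat p_j|)\,dx\lesssim t|Q_j|$ for each $j$. Since $(\hat u_j,\hat p_j)$ solves the homogeneous Stokes system in $\Omega\setminus\overline{Q_j}$ with zero Dirichlet data, I would decompose $\Omega\setminus B_j^{*}$ into the dyadic annuli $\{2^kr_j\le|x-x_j|\le2^{k+1}r_j\}$ and, on the $k$-th one, iterate the interior and boundary $L^q$-mean oscillation estimates (Lemma~\ref{171101@lem1}, which controls the oscillation at a boundary point via the adapted coordinate system and its $C^{1,\mathrm{Dini}}$ defining function) together with the cancellation $\int_{Q_j}\hat f_\alpha=\int_{Q_j}\hat g=0$, obtaining a pointwise bound $|D\hat u_j|+|\hat p_j|\lesssim(2^kr_j)^{-d}\big(\|\hat f_\alpha\mathbf{1}_{Q_j}\|_{L^1}+\|\hat g\,\mathbf{1}_{Q_j}\|_{L^1}\big)\,\sigma_k$ on that annulus, where $\sigma_k$ collects the factor produced by composing the estimate over the $\sim k$ scales between $2^kr_j$ and $\operatorname{diam}\Omega$. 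Integrating over the annulus cancels the $(2^kr_j)^{-d}$, and summing over $k$ requires $\sum_k\sigma_k<\infty$: this is exactly where \eqref{171127@B1} enters, since from $\varrho_0(r)+\omega_{A^{\alpha\beta}}(r)\lesssim(\ln r)^{-2}$ one controls the iterated modulus $\widetilde\omega$ arising from the mean oscillation estimate and obtains the needed summability, which the bare Dini condition does not provide.

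The main obstacle I expect is precisely this last step: making the dyadic-annulus iteration rigorous uniformly up to the curved boundary. Because the boundary mean oscillation estimate is stated in the coordinate system adapted to each point (and the flattening map introduces lower-order terms, as explained in the introduction), patching these local estimates along a chain of annuli that may repeatedly approach $\partial\Omega$ forces one to track carefully how the constants and the moduli $\varrho_0,\omega_{A^{\alpha\beta}}$ propagate; the bookkeeping that converts \eqref{171127@B1} into $\sum_k\sigma_k<\infty$ --- and which is the only place where $(\ln r)^{-2}$ decay, rather than mere Diniteness, is essential --- is the delicate point.
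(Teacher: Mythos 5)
Your proposal follows the same high-level strategy as the paper --- Calder\'on--Zygmund decomposition plus the mean-oscillation estimates of Lemmas~\ref{171228@lem1}--\ref{171102@lem5}, with the logarithmic hypothesis \eqref{171127@B1} supplying summability over dyadic scales --- but it takes a more direct route than the paper, and that route has a genuine gap at the crucial bad-part estimate. The paper packages the CZ machinery into Lemma~\ref{171127@lem1} (from \cite{MR3747493}), whose hypothesis is a H\"ormander-type $L^1$ bound that is then verified by \emph{duality}: one pairs $(Du,p)$ with test data $(\phi_\alpha,\psi)$ supported in an annulus $\Omega_{2R}(x_0)\setminus B_R(x_0)$, solves the \emph{adjoint} Stokes problem \eqref{171127@eq6a}, and in the resulting identity \eqref{171127@eq6b} the mean-zero property of $(f_\alpha,g)$ is used to subtract the averages $(D_\alpha v)_{\Omega_r(x_0)}$ and $(\pi)_{\Omega_r(x_0)}$; the smallness then comes from applying \eqref{171128@C1}--\eqref{180315@eq1} to the adjoint solution $(v,\pi)$, whose data lives far from $\Omega_r(x_0)$. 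In contrast, you propose to bound $|D\hat u_j|+|\hat p_j|$ pointwise on each annulus ``using the interior and boundary mean-oscillation estimates together with the cancellation $\int_{Q_j}\hat f_\alpha=\int_{Q_j}\hat g=0$.'' This is where the argument breaks: the cancellation is a property of the \emph{data}, not of the \emph{solution}. Once you are in $\Omega\setminus\overline{Q_j}$ the system is homogeneous, so the mean-oscillation/$C^1$ estimates give local regularity of $(\hat u_j,\hat p_j)$ in terms of its own $L^1$ norm on a larger set, but they do not by themselves produce the $O\big((2^kr_j)^{-d}\big)$ decay that you need. To convert the cancellation of the data into decay of the solution one must either pass through a Green-function representation (not constructed in the paper, and nontrivial for Stokes because the pressure is nonlocal) or use duality, which is exactly what the paper does. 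So the ``iterate and use cancellation'' step is not a bookkeeping issue about curved boundaries, as you suggest; it is the missing idea.

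Two smaller omissions: (i) your normalization ``by scaling $\Omega\mapsto\rho\Omega$'' does not clearly preserve the hypotheses in a useful way, and in any case it is unnecessary --- the paper simply takes $R\in[4r,\operatorname{diam}\Omega)$ and sums finitely many annuli using $N-1\lesssim\ln(1/r)$; (ii) your decomposition $(u,p)=(\bar u,\bar p)+(\hat u,\hat p)$ tacitly assumes $(u,p)$ is the $W^{1,2}_0\times\tilde L^2$ solution, which handles $q\ge 2$, but the case $q\in(1,2)$ still requires an approximation argument together with $W^{1,q}$-solvability for VMO coefficients (the paper invokes \cite{MR3693868} and shows in Remark~\ref{180213@rmk1} that Dini mean oscillation implies the needed VMO condition); your proposal does not address this range of $q$ at all.
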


\begin{remark}		\label{180419@rmk1}
Under the hypothesis of Theorem \ref{M4} $(a)$, the unique solvability of the problem \eqref{171006@eq2} is available in the solution space $W^{1,2}_0(\Omega)^d\times \tilde{L}^2(\Omega)$ as well as $W^{1,q}_0(\Omega)^d\times \tilde{L}^q(\Omega)$ with $q\in (1,\infty)$, when $f_\alpha\in L^{q}(\Omega)^d$ and $g\in L^q(\Omega)$; see the proof of Theorem \ref{M5}.
Therefore, in Theorems \ref{M4} and \ref{M5}, the weak solutions indeed exist.
\end{remark}

We present the $W^{1,q}$-estimate for a $W^{1,1}$-weak solution, which follows from Theorem \ref{M4}, the solvability result mentioned in Remark \ref{180419@rmk1}, and the argument in Brezis \cite{MR2465684} (see also \cite[Appendix]{MR2548032}).
For a proof, one may refer to the proofs of \cite[Theorems 2.5 and 5.4]{arXiv:1803.05560}, where we proved the $W^{1,q}$-estimates for $W^{1,1}$-weak solutions to the Stokes system with partially Dini mean oscillation coefficients in a ball and a half ball.

\begin{corollary}		\label{180419@cor1}
Let $q\in (1,\infty)$ and $\Omega$ be a bounded domain in $\bR^d$ having $C^{1,\rm{Dini}}$ boundary.
Assume that $(u, p)\in W^{1,1}_0(\Omega)^d\times \tilde{L}^1(\Omega)$ is a weak solution of \eqref{171006@eq2}, where $f_\alpha\in L^q(\Omega)^d$ and $g\in L^q(\Omega)$.
If $A^{\alpha\beta}$ are of  Dini mean oscillation in $\Omega$, then we have $(u, p)\in W^{1,q}_0(\Omega)^d\times \tilde{L}^q(\Omega)$ with the estimate
$$
\|u\|_{W^{1,q}(\Omega)}+\|p\|_{L^q(\Omega)}\le C\big(\|u\|_{W^{1,1}(\Omega)}+\|p\|_{L^1(\Omega)}+\|f_\alpha\|_{L^q(\Omega)}+\|g\|_{L^q(\Omega)}\big),
$$
where the constant $C$ depends only on $d$, $\lambda$, $\Omega$, $R_0$, $\varrho_0$, and $\omega_{A^{\alpha\beta}}$.
\end{corollary}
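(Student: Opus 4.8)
The plan is to carry out the duality argument of Brezis \cite{MR2465684}, using the boundary $C^1$-regularity of Theorem \ref{M4}$(a)$ for the adjoint problem and the solvability statement of Remark \ref{180419@rmk1}. Since $A^{\alpha\beta}$ are of Dini mean oscillation, that solvability statement produces a unique $(v,\pi)\in W^{1,q}_0(\Omega)^d\times\tilde L^q(\Omega)$ solving \eqref{171006@eq2} with the given $f_\alpha$ and $g$, together with the estimate
\[
\|v\|_{W^{1,q}(\Omega)}+\|\pi\|_{L^q(\Omega)}\le C\big(\|f_\alpha\|_{L^q(\Omega)}+\|g\|_{L^q(\Omega)}\big).
\]
Hence it suffices to prove $(u,p)=(v,\pi)$, after which the asserted estimate is immediate (in fact in the stronger form without the $\|u\|_{W^{1,1}}+\|p\|_{L^1}$ term).

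Set $(w,\sigma):=(u-v,\,p-\pi)\in W^{1,1}_0(\Omega)^d\times\tilde L^1(\Omega)$, a $W^{1,1}$-weak solution of the homogeneous system, so that (modulo a harmless sign convention induced by \eqref{171006@eq2}, which plays no role below)
\[
\int_\Omega A^{\alpha\beta}D_\beta w\cdot D_\alpha\varphi\,dx=\int_\Omega\sigma\operatorname{div}\varphi\,dx\qquad\text{for all }\varphi\in C^\infty_0(\Omega)^d,
\]
and $\operatorname{div}w=0$ a.e.\ in $\Omega$. For arbitrary $\phi\in C^\infty_0(\Omega)^d$ and $\zeta\in C^\infty_0(\Omega)$ I would let $(\psi,\rho)\in W^{1,2}_0(\Omega)^d\times\tilde L^2(\Omega)$ solve the adjoint problem $\cL^\ast\psi+\nabla\rho=\phi$, $\operatorname{div}\psi=\zeta-(\zeta)_\Omega$ in $\Omega$, where $\cL^\ast$ has coefficients $(A^{\beta\alpha})^\top$. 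One checks that $\cL^\ast$ is strongly elliptic with the same constant $\lambda$ and that its coefficients are of Dini mean oscillation with moduli controlled by those of $A^{\alpha\beta}$; since $\phi$ and $\zeta$ are smooth, hence of Dini mean oscillation, and a lower-order $L^q$ forcing term with $q>d$ is permitted by the remark following Theorem \ref{M4}, the solvability statement gives such a $(\psi,\rho)$, and Theorem \ref{M4}$(a)$ (applied to $\cL^\ast$) yields $(\psi,\rho)\in C^1(\overline\Omega)^d\times C(\overline\Omega)$. In particular $D\psi$ and $\rho$ are bounded on $\Omega$ and $\psi=0$ on $\partial\Omega$.

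With these ingredients the duality is routine. First, $w$ is an admissible test function in the weak formulation of the adjoint problem: $C^\infty_0(\Omega)^d$ is dense in $W^{1,1}_0(\Omega)^d$ and, because $D\psi,\rho,\phi\in L^\infty(\Omega)$, every term of that formulation is continuous in the $W^{1,1}$-norm of the test function; inserting $w$ and using $\operatorname{div}w=0$ expresses $\int_\Omega A^{\alpha\beta}D_\beta w\cdot D_\alpha\psi\,dx$ in terms of $\int_\Omega\phi\cdot w\,dx$. Second, $\psi$ is an admissible test function in the weak formulation of $(w,\sigma)$: since $D_\beta w,\sigma\in L^1(\Omega)$ while $D\psi\in L^\infty(\Omega)$, it suffices to approximate $\psi$ by $\varphi_n\in C^\infty_0(\Omega)^d$ that are bounded in $W^{1,\infty}$ and converge, together with their gradients, a.e.\ to $\psi$; this is possible because $\partial\Omega$ is $C^1$ and $\psi\in C^1(\overline\Omega)$ vanishes on $\partial\Omega$ (so $|\psi(x)|\le C\,\mathrm{dist}(x,\partial\Omega)$), via a cutoff supported away from $\partial\Omega$ followed by mollification, and one then passes to the limit by dominated convergence. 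Inserting $\psi$ and using $\operatorname{div}\psi=\zeta-(\zeta)_\Omega$ together with $\sigma\in\tilde L^1(\Omega)$ relates $\int_\Omega A^{\alpha\beta}D_\beta w\cdot D_\alpha\psi\,dx$ to $\int_\Omega\sigma\zeta\,dx$. Comparing the two, $\int_\Omega\phi\cdot w\,dx$ and $\int_\Omega\sigma\zeta\,dx$ coincide (up to the fixed sign) for all $\phi\in C^\infty_0(\Omega)^d$ and $\zeta\in C^\infty_0(\Omega)$; taking $\zeta\equiv0$ gives $w=0$ a.e.\ and taking $\phi\equiv0$ gives $\sigma=0$ a.e. Therefore $(u,p)=(v,\pi)\in W^{1,q}_0(\Omega)^d\times\tilde L^q(\Omega)$ with the desired estimate.

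The step I expect to be the main obstacle is the justification of the two test-function substitutions, and especially the second: that the adjoint velocity $\psi$, known only to belong to $C^1(\overline\Omega)^d$ and to vanish on $\partial\Omega$, may be used as a test function against the merely $W^{1,1}$ solution $(w,\sigma)$. This relies on the boundary approximation indicated above together with the $C^1$ (hence Lipschitz) regularity of $\partial\Omega$ furnished by the $C^{1,\mathrm{Dini}}$ hypothesis; everything else reduces to Remark \ref{180419@rmk1} and Theorem \ref{M4}$(a)$.
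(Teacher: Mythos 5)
Your proof is correct and follows exactly the route the paper indicates: solvability from Remark~\ref{180419@rmk1} to construct $(v,\pi)\in W^{1,q}_0\times\tilde L^q$, the $C^1(\overline\Omega)$ regularity from Theorem~\ref{M4}$(a)$ (extended to lower-order $L^q$ forcing by the remark following it) applied to the adjoint problem, and the Brezis duality argument to conclude uniqueness in $W^{1,1}_0\times\tilde L^1$, hence $(u,p)=(v,\pi)$. The justifications of the two test-function insertions (continuity of the adjoint form in the $W^{1,1}$-norm, and the cutoff-plus-mollification approximation of $\psi\in C^1(\overline\Omega)$ vanishing on $\partial\Omega$ by $C^\infty_0$ functions bounded in $W^{1,\infty}$ together with dominated convergence) are exactly the technical points one needs to make Brezis's argument rigorous, and your treatment of them is sound.
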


We finish this section with a remark that, by Corollary \ref{180419@cor1} the results in Theorems \ref{M4} and \ref{M5} still hold under the assumption that $(u, p)\in W^{1,1}_0(\Omega)^d\times \tilde{L}^1(\Omega)$.

\section{Proof of main Theorems}		\label{Sec2}

Hereafter in the paper,  we use the following notation.
\begin{notation}
For nonnegative (variable) quantities $A$ and $B$,
we denote $A\lesssim B$ if there exists a generic positive constant C such that $A \le CB$.
We add subscript letters like $A\lesssim_{a,b} B$ to indicate the dependence of the implicit constant $C$ on the parameters $a$ and $b$.
\end{notation}

\subsection{Proof of Theorem \ref{M4}}
We shall derive a priori estimates for $Du$ and $p$ by assuming that $(u,p)\in C^1(\overline{\Omega})^d\times C(\overline{\Omega})$.
The general case follows from a standard approximation argument.

Throughout this proof, we use the following notation and properties.
Recall that $\varrho_0$ is the Dini function from Definition \ref{D3}.

\begin{enumerate}[i.]
\item
We set $q=1/2$ and
$$
\Phi(x_0, r):=\inf_{\substack{\theta\in \bR \\ \Theta\in \bR^{d\times d}}}\bigg(\dashint_{\Omega_r(x_0)}|Du-\Theta|^q+|p-\theta|^q\,dx\bigg)^{1/q}.
$$
\item
For any $x\in \overline{\Omega}$ and $r\in (0,1]$, we have
\begin{equation}		\label{171127@eq3}
r^d\lesssim_{d,R_0, \varrho_0} |\Omega_r(x)|.
\end{equation}
\item
For $\gamma\in (0,1)$ and $\kappa\in (0,1/2]$, we define
$$
\tilde{\varrho}_{0}(r):=\varrho_0(r)+\sum_{i=1}^\infty \kappa^{\gamma i}\big(\varrho_{0}(\kappa^{-i}r)[\kappa^{-i} r<1]+\varrho_{0}(1)[\kappa^{-i}r\ge 1]\big),
$$
where we use Inverse bracket notation;
i.e., $[P]=1$ if $P$ is true and $[P]=0$ otherwise.
By Lemma \ref{171024@lem1}, $\tilde{\varrho}_0:(0, 1]\to [0,\infty)$ is a Dini function satisfying
\begin{equation}		\label{171102@eq3}
\tilde{\varrho}_0(t)\lesssim_{\varrho_0} \tilde{\varrho}_0(s)\lesssim_{\varrho_0}\tilde{\varrho}_0(t) \quad \text{whenever }\, \frac{t}{2}\le s\le t\le 1.
\end{equation}
Moreover, by the comparison principle for Riemann integrals, we have
$$
\sum_{j=0}^\infty \tilde{\varrho}_0(\kappa^j r)\lesssim_{\varrho_0,\kappa} \int_0^r \frac{\tilde{\varrho}_0(t)}{t}\,dt<\infty
$$
for all $r\in (0,1]$.
\item
For $\gamma\in (0,1)$, $\kappa\in (0,1/2]$, and $f\in L^1(\Omega)$, we denote
$$
\tilde{\omega}_{f}(r):=\sum_{i=1}^\infty \kappa^{\gamma i}\big(\omega_{f}(\kappa^{-i}r)[\kappa^{-i} r<1]+\omega_{f}(1)[\kappa^{-i}r\ge 1]\big).
$$
By Remark \ref{171020@rmk1}, \eqref{171127@eq3}, and Lemma \ref{171024@lem1}, if $f$ is of Dini mean oscillation in $\Omega$, then $\tilde{\omega}_{f}:(0, 1]\to [0,\infty)$ is a Dini function satisfying
$$
\tilde{\omega}_{f}(t)\lesssim_{d,R_0, \varrho_0} \tilde{\omega}_{f}(s)\lesssim_{d,R_0, \varrho_0} \tilde{\omega}_{f}(t) \quad \text{whenever }\,\frac{t}{2}\le s\le t\le 1.
$$
Moreover,  we have
\begin{equation}		\label{171229@eq1a}
\sum_{j=0}^\infty \tilde{\omega}_{f}(\kappa^j r)\lesssim_{d ,R_0, \varrho_0, \kappa}  \int_0^{r} \frac{\tilde{\omega}_{f}(t)}{t}\,dt <\infty
\end{equation}
for all $r\in (0,1]$.
\end{enumerate}

To prove Theorem \ref{M4}, we will use the following three lemmas related to $L^q$-mean oscillation estimates for $Du$ and $p$.
The first lemma is about the interior estimates, which is an adaptation of \cite[Lemma 4.3]{arXiv:1803.05560}.

\begin{lemma}		\label{171228@lem1}
Let $x_0\in \Omega$ and $\gamma\in (0,1)$.
Under the same hypothesis of Theorem \ref{M4} $(a)$,
there exists a constant $\kappa_1\in (0,1/2]$ depending only on $d$, $\lambda$, and $\gamma$, such that the following hold.
\begin{enumerate}[$(i)$]
\item
For any $0<\kappa\le \kappa_1$ and $0<r\le \min\{1,\operatorname{dist}(x_0, \partial \Omega)/4\}$, we have
$$
\begin{aligned}
\sum_{j=0}^\infty \Phi(x_0, \kappa^j r)&\lesssim_{d,\lambda,\gamma,R_0, \varrho_0,\kappa} \Phi(x_0, r)\\
&\quad +\|Du\|_{L^\infty(B_r(x_0))}\int_0^r \frac{\tilde{\omega}_{A^{\alpha\beta}}(t)}{t}\,dt+\int_0^r \frac{\tilde{\omega}_{f_\alpha}(t)+\tilde{\omega}_{g}(t)}{t}\,dt.
\end{aligned}
$$
\item
For any $0<\kappa\le \kappa_1$ and $0<\rho\le r\le \min\{1,\operatorname{dist}(x_0, \partial \Omega)/4\}$, we have
$$
\Phi(x_0, \rho) \lesssim_{d,\lambda,\gamma,\kappa} \left(\frac{\rho}{r}\right)^\gamma \Phi(x_0, r)+ \|Du\|_{L^\infty(B_r(x_0))}\tilde{\omega}_{A^{\alpha\beta}}(\rho)+\tilde{\omega}_{f_\alpha}(\rho)+\tilde{\omega}_g (\rho).
$$
\end{enumerate}
\end{lemma}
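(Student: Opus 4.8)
The plan is to establish a one–step decay estimate for $\Phi(x_0,\cdot)$ along the geometric sequence $r,\kappa r,\kappa^2 r,\dots$ by freezing the coefficients at scale $r$, and then to iterate it; parts $(i)$ and $(ii)$ both follow by summation together with the properties of $\tilde\omega_\bullet$ recorded in the preliminaries. Since under the a priori assumption $Du\in L^\infty$, all quantities below are finite and the telescoping is legitimate. Fix $0<r\le\min\{1,\operatorname{dist}(x_0,\partial\Omega)/4\}$, so that $B_r:=B_r(x_0)\Subset\Omega$. Set $\bar A^{\alpha\beta}=(A^{\alpha\beta})_{B_r}$ and $F_\alpha=f_\alpha+(\bar A^{\alpha\beta}-A^{\alpha\beta})D_\beta u$, so that $D_\alpha(\bar A^{\alpha\beta}D_\beta u)+\nabla p=D_\alpha F_\alpha$ and $\operatorname{div}u=g-(g)_\Omega$ in $B_r$. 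Let $(w,\pi_1)\in W^{1,2}_0(B_r)^d\times\tilde{L}^2(B_r)$ solve
$$
D_\alpha(\bar A^{\alpha\beta}D_\beta w)+\nabla\pi_1=D_\alpha\big(F_\alpha-(F_\alpha)_{B_r}\big),\qquad \operatorname{div}w=g-(g)_{B_r}\quad\text{in }B_r,
$$
and put $(v,\pi_2)=(u-w,p-\pi_1)$; then $\operatorname{div}v$ is constant and $D_\alpha(\bar A^{\alpha\beta}D_\beta v)+\nabla\pi_2=0$ in $B_r$, so after subtracting a linear vector field of matching divergence $(v,\pi_2)$ becomes a weak solution of a homogeneous constant-coefficient Stokes system in $B_r$, hence is smooth there.

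For the $w$-part I would invoke the local weak type-$(1,1)$ estimate for the Dirichlet problem in a ball from \cite{arXiv:1803.05560}, together with $L^{1,\infty}(B_r)\hookrightarrow L^q(B_r)$ for $q=1/2$, to obtain
$$
\Big(\dashint_{B_r}|Dw|^q+|\pi_1|^q\,dx\Big)^{1/q}\lesssim_{d,\lambda}\dashint_{B_r}\big(|f_\alpha-(f_\alpha)_{B_r}|+|g-(g)_{B_r}|\big)\,dx+\|Du\|_{L^\infty(B_r)}\dashint_{B_r}|A^{\alpha\beta}-(A^{\alpha\beta})_{B_r}|\,dx,
$$
the right-hand side being $\lesssim_{d,\lambda}\omega_{f_\alpha}(r)+\omega_g(r)+\|Du\|_{L^\infty(B_r)}\omega_{A^{\alpha\beta}}(r)$ (the Dini mean oscillation of $g$ enters here, through the constraint $\operatorname{div}w=g-(g)_{B_r}$). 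For the $v$-part, interior estimates for the homogeneous constant-coefficient Stokes system bound $\|D^2v\|_{L^\infty(B_{r/2})}+\|D\pi_2\|_{L^\infty(B_{r/2})}$ by $r^{-1}\inf_{\Theta,\theta}(\dashint_{B_r}|Dv-\Theta|^q+|\pi_2-\theta|^q)^{1/q}$, whence for $\rho\le r/2$ the $B_\rho$-oscillation of $(Dv,\pi_2)$ is $\lesssim_{d,\lambda}(\rho/r)$ times the corresponding quantity over $B_r$. Combining the two via the quasi-triangle inequality $|a+b|^q\le|a|^q+|b|^q$ ($q<1$), taking $(Dv)_{B_{\kappa r}}$ and $(\pi_2)_{B_{\kappa r}}$ as competitors in $\Phi(x_0,\kappa r)$, and using $\dashint_{B_{\kappa r}}h\le\kappa^{-d}\dashint_{B_r}h$ (for $h\ge0$) on the $w$-terms, gives, for $0<\kappa\le1/2$,
$$
\Phi(x_0,\kappa r)\le C_*\kappa\,\Phi(x_0,r)+C(\kappa)\big(\|Du\|_{L^\infty(B_r)}\omega_{A^{\alpha\beta}}(r)+\omega_{f_\alpha}(r)+\omega_g(r)\big),\qquad C_*=C_*(d,\lambda).
$$
Choosing $\kappa_1:=\min\{1/2,\,C_*^{-1/(1-\gamma)}\}$, which depends only on $d,\lambda,\gamma$, makes $C_*\kappa\le\kappa^\gamma$ for all $\kappa\le\kappa_1$; this is the one-step estimate.

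Finally I would iterate. With $a_j:=\Phi(x_0,\kappa^jr)$ and $\|Du\|_{L^\infty(B_{\kappa^jr})}\le\|Du\|_{L^\infty(B_r)}$, unrolling $a_{j+1}\le\kappa^\gamma a_j+C(\kappa)(\dots)$ and re-indexing the resulting geometric convolution ($i=N-j$)
$$
\sum_{j=0}^{N-1}\kappa^{(N-1-j)\gamma}\,\omega_{A^{\alpha\beta}}(\kappa^jr)=\kappa^{-\gamma}\sum_{i=1}^{N}\kappa^{i\gamma}\,\omega_{A^{\alpha\beta}}\big(\kappa^{-i}\kappa^Nr\big)\le\kappa^{-\gamma}\,\tilde\omega_{A^{\alpha\beta}}(\kappa^Nr)
$$
(and likewise for $f_\alpha$ and $g$) yields part $(ii)$ at the scales $\rho=\kappa^Nr$; for a general $\rho\in(0,r]$ one enlarges $B_\rho$ to the containing ball $B_{\kappa^Nr}$ and uses that $\tilde\omega_\bullet$ and $t\mapsto t^\gamma$ are comparable over comparable scales. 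Part $(i)$ then follows by summing the estimate of part $(ii)$ over $\rho=\kappa^jr$, $j\ge0$, and invoking \eqref{171229@eq1a} to bound $\sum_j\tilde\omega_{A^{\alpha\beta}}(\kappa^jr)\lesssim_{d,R_0,\varrho_0,\kappa}\int_0^rt^{-1}\tilde\omega_{A^{\alpha\beta}}(t)\,dt$, and similarly for $f_\alpha$ and $g$.

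I expect the main difficulty to be the bookkeeping forced by the quasi-norm exponent $q=1/2<1$: since the ordinary triangle inequality and Jensen's inequality are unavailable, the passage between the infimum defining $\Phi$ and oscillations around averages, the stability of the (near-)minimizers $(\Theta_r,\theta_r)$ across dyadic scales, the reverse-H\"older/sub-solution estimates needed to phrase the interior regularity of $(v,\pi_2)$ in terms of $L^q$-oscillations, and the absorption of the $\kappa^{-d/q}$ losses produced by the lower-order $(w)$-terms all have to be carried out through $L^q$-quasi-norm inequalities. The weak type-$(1,1)$ bound for $(Dw,\pi_1)$ with the inhomogeneous divergence datum $g-(g)_{B_r}$ is the other delicate point, but it is already available from \cite{arXiv:1803.05560}.
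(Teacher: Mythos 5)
Your proposal is correct and takes essentially the same route as the paper: the paper's proof of this lemma simply cites the one-step decay estimate from \cite[Lemma 4.3]{arXiv:1803.05560}, and what you do is reconstruct exactly that estimate by the same decomposition (freeze $A^{\alpha\beta}$ at scale $r$, absorb the oscillating part into $F_\alpha$, split off a Dirichlet piece $(w,\pi_1)$ handled by the weak type-$(1,1)$ bound, and apply constant-coefficient interior regularity to the smooth remainder $(v,\pi_2)$), after which the choice of $\kappa_1$ with $C_0\kappa_1^{1-\gamma}\le 1$, the iteration, the re-indexing identity matching \eqref{171229@eq3}, and the summation via \eqref{171229@eq1a} all coincide with the paper. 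The only superficial difference is order: you derive part $(ii)$ first and sum it to get $(i)$, while the paper iterates to \eqref{171229@eq2}, sums for $(i)$, and then reuses \eqref{171229@eq2} at a general scale for $(ii)$; these are equivalent. You also correctly flag the $q=1/2<1$ bookkeeping (quasi-triangle inequality, passing from the weak-$(1,1)$ to $L^q$-means, and the interior $L^q$-oscillation estimate for the homogeneous constant-coefficient Stokes system) as the genuinely delicate steps that the paper offloads to \cite{arXiv:1803.05560}.
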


\begin{proof}
By following the proof of \cite[Lemma 4.3]{arXiv:1803.05560}, we see that
$$
\begin{aligned}
\Phi(x_0, \kappa r) &\le C_0 \kappa \Phi(x_0, r)\\
&\quad +C_0\kappa^{-d/q}\big(\|Du\|_{L^\infty (B_r(x_0))}\omega_{A^{\alpha\beta}}(r)+\omega_{f_\alpha}(r)+\omega_g(r)\big)
\end{aligned}
$$
for all $0<\kappa\le 1/2$ and $0<r\le\min\{1,\operatorname{dist}(x_0, \partial \Omega)/4\}$, where $C_0=C_0(d,\lambda)>0$.
We take $\kappa_1=\kappa_1(d,\lambda,\gamma)\in (0, 1/2]$ such that $C_0 \kappa_1^{1-\gamma}\le 1$.
Then for any $0<\kappa\le \kappa_1$, we have
$$
\Phi(x_0, \kappa r) \le \kappa^\gamma \Phi(x_0, r)+C\big( \|Du\|_{L^\infty (B_r(x_0))}\omega_{A^{\alpha\beta}}(r) + \omega_{f_\alpha}(r)+\omega_g(r)\big),
$$
where $C=C(d,\lambda,\gamma,\kappa)$.
By iterating, we obtain for $j\in \{1,2,\ldots\}$ that
\begin{equation}		\label{171229@eq2}
\begin{aligned}
\Phi(x_0, \kappa^j r)&\le \kappa^{\gamma j} \Phi(x_0, r)\\
&\quad +C\big(\|Du\|_{L^\infty(B_r(x_0))}\tilde{\omega}_{A^{\alpha\beta}}(\kappa^j r)+\tilde{\omega}_{f_\alpha}(\kappa^j r)+\tilde{\omega}_g(\kappa^j r)\big),
\end{aligned}
\end{equation}
where we used the fact that
\begin{equation}		\label{171229@eq3}
\sum_{i=1}^j \kappa^{\gamma(i-1)}\omega_{\bullet}(\kappa^{j-i}r)\le \kappa^{-\gamma} \tilde{\omega}_{\bullet}(\kappa^j r).
\end{equation}
Taking the summations of both sides of \eqref{171229@eq2} with respect to $j=0,1,2,\ldots$, and using \eqref{171229@eq1a}, we see that the assertion $(i)$ holds.

For given $\rho\in (0, r]$, let $j$ be an integer such that
$$
\kappa^{j+1}<\frac{\rho}{r}\le \kappa^j.
$$
If $j=0$, then obviously we have
$$
\Phi(x_0, \rho)\lesssim_{d,\kappa} \Phi(x_0, r)\lesssim_{d,\kappa,\gamma} \left(\frac{\rho}{r}\right)^{\gamma}\Phi(x_0,r).
$$
On the other hand, if $j\ge 1$, then by \eqref{171229@eq2} with $\rho$ in place of $\kappa^j r$, we get
$$
\begin{aligned}
\Phi(x_0, \rho)&\lesssim \kappa^{\gamma j}\Phi(x_0, \kappa^{-j}\rho)+\|Du\|_{L^\infty(B_{\kappa^{-j}\rho}(x_0))}\tilde{\omega}_{A^{\alpha\beta}}(\rho)+\tilde{\omega}_{f_\alpha}(\rho)+\tilde{\omega}_g(\rho)\\
&\lesssim \left(\frac{\rho}{r}\right)^\gamma\Phi(x_0, r)+\|Du\|_{L^\infty(B_r(x_0))}\tilde{\omega}_{A^{\alpha\beta}}(\rho)+\tilde{\omega}_{f_\alpha}(\rho)+\tilde{\omega}_g(\rho).
\end{aligned}
$$
Therefore, the assertion $(ii)$ holds.
The lemma is proved.
\end{proof}

In the next lemma, we prove $L^q$-mean oscillation estimates of linear combinations of $Du$ and $p$ at $x_0\in \partial \Omega$.
We note that the $L^q$-mean oscillation and its estimates depend on the coordinate system associated with $x_0$.

\begin{lemma}		\label{171101@lem1}
Let $x_0\in \partial \Omega$ and $\gamma\in (0,1)$.
Let us fix a $C^{1,\rm{Dini}}$ function $\chi:\bR^{d-1}\to \bR$ and a coordinate system associated with $x_0$ satisfying \eqref{171101@E1} and \eqref{171101@E2} in Definition \ref{D3}.
In this coordinate system, we define
$$
\Psi(x_0, r):=\inf_{\substack{\theta\in \bR \\  \Theta\in \bR^{d}}}\bigg(\dashint_{\Omega_r(x_0)}|D_1 u-\Theta|^q +\sum_{i=2}^d|D_i \chi D_1 u+D_i u|^q+|p-\theta|^q\,dx\bigg)^{1/q}.
$$
Then under the same hypothesis of Theorem \ref{M4} $(a)$,
there exist constants
$$
R_1=R_1(\varrho_0, R_0)\in (0, R_0/4) \quad \text{and}\quad \kappa_{2}=\kappa_{2}(d,\lambda,\gamma,  R_0,\varrho_0)\in (0,1/8]
$$
such that the following hold.
\begin{enumerate}[$(i)$]
\item
For any $0<\kappa\le \kappa_2$ and $0<r\le 2R_1$, we have
$$
\begin{aligned}
\sum_{j=0}^\infty \Psi(x_0, \kappa^j r)
&\lesssim_{d,\lambda,\gamma, R_0,\varrho_0 ,\kappa} \Psi(x_0, r)\\
&\quad + \big(\|Du\|_{L^\infty(\Omega_r(x_0))}+\|p\|_{L^\infty(\Omega_r(x_0))}\big)\int_0^r \frac{\tilde{\varrho}_0(t)+\tilde{\omega}_{A^{\alpha\beta}}(t)}{t}\,dt\\
&\quad +\|f_\alpha\|_{L^\infty(\Omega_r(x_0))}\int_0^r \frac{\tilde{\varrho}_0(t)}{t}\,dt +\int_0^r \frac{\tilde{\omega}_{f_\alpha}(t)+\tilde{\omega}_g(t)}{t}\,dt.
\end{aligned}
$$
\item
For any $0<\kappa\le \kappa_2$ and $0<\rho\le r\le 2R_1$, we have
$$
\begin{aligned}
\Psi(x_0,\rho)
&\lesssim_{d,\lambda,\gamma, R_0,\varrho_0,\kappa} \left(\frac{\rho}{r}\right)^\gamma \Psi(x_0, r)\\
&\quad +\big(\|Du\|_{L^\infty(\Omega_r(x_0))}+\|p\|_{L^\infty(\Omega_r(x_0))}\big) \big(\tilde{\varrho}_0(\rho)+\tilde{\omega}_{A^{\alpha\beta}}(\rho)\big)\\
&\quad +\|f_\alpha\|_{L^\infty(\Omega_r(x_0))}\tilde{\varrho}_0(\rho)+\tilde{\omega}_{f_\alpha}(\rho)+\tilde{\omega}_g(\rho).
\end{aligned}
$$
\end{enumerate}
\end{lemma}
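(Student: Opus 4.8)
The plan is to run the iteration of Lemma~\ref{171228@lem1}, the one new ingredient being a one-step decay estimate for $\Psi$ near the curved boundary, obtained by flattening. After a translation we take $x_0=0$ (so $\Omega_r:=\Omega_r(0)$, and all half balls below are centered at $0$), work in the coordinate system fixed in the statement, so $\nabla_{x'}\chi(0)=0$, $\chi(0)=0$, and $\Omega_{R_0}=\{x_1>\chi(x')\}\cap B_{R_0}$. Let $\mathcal T\colon x\mapsto y=(x_1-\chi(x'),x')$, a volume-preserving map ($\det D\mathcal T\equiv1$) sending $\partial\Omega$ near $0$ into $\{y_1=0\}$. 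By \eqref{171101@E1} and \eqref{171101@E2} there is $R_1=R_1(\varrho_0,R_0)\in(0,R_0/4)$ so that for $r\le2R_1$ the map $\mathcal T$ displaces points by at most $cr\varrho_0(r)$, whence one has nestings $\mathcal T(\Omega_{\kappa r})\subset B^*\subset\mathcal T(\Omega_r)$ for a half ball $B^*$ of radius $\sim r$, with measure ratios $\kappa^{-d}$ and $1+O(\varrho_0(r))$. Writing $v=u\circ\mathcal T^{-1}$ and keeping, by abuse, the names $p,f_\alpha,g$ for the pull-backs, the chain rule gives $D_{y_1}v=D_1u$ and $D_{y_i}v=D_i\chi\,D_1u+D_iu$ for $i\ge2$, so that
$$\Psi(0,r)=\inf_{\substack{\theta\in\bR\\ \Theta\in\bR^d}}\Big(\dashint_{\mathcal T(\Omega_r)}|D_{y_1}v-\Theta|^q+\sum_{i=2}^d|D_{y_i}v|^q+|p-\theta|^q\,dy\Big)^{1/q}.$$
A direct computation turns \eqref{171006@eq2} into $D_\gamma(\hat A^{\gamma\delta}D_\delta v)+\nabla p=D_\gamma\hat f_\gamma$, $\operatorname{div}v=\hat g$ on $\mathcal T(\Omega_r)$, where $\hat A$ is the Jacobian conjugate of $A^{\alpha\beta}\circ\mathcal T^{-1}$ (strongly elliptic for small $\nabla_{x'}\chi$, and of Dini mean oscillation with $\omega_{\hat A}(r)\lesssim\omega_{A^{\alpha\beta}}(r)+\varrho_0(r)$), and $\hat f_\gamma,\hat g$ come from $f_\alpha\circ\mathcal T^{-1},g\circ\mathcal T^{-1}$ plus lower-order corrections carrying a factor $\nabla_{x'}\chi$ --- schematically $\hat f_\gamma\sim f_\alpha+(\nabla_{x'}\chi)\,p$ and $\hat g\sim g+(\nabla_{x'}\chi)\,Du$. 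Since $\nabla_{x'}\chi(0)=0$ forces $|\nabla_{x'}\chi|\le\varrho_{\nabla_{x'}\chi}(r)\le\varrho_0(r)$ on $\mathcal T(\Omega_r)$, the $L^1$-mean oscillations of $\hat f_\gamma,\hat g$ over $\mathcal T(\Omega_r)$ are $\lesssim\omega_{f_\alpha}(r)+(\|f_\alpha\|_{L^\infty(\Omega_r)}+\|p\|_{L^\infty(\Omega_r)})\varrho_0(r)$ and $\lesssim\omega_g(r)+\|Du\|_{L^\infty(\Omega_r)}\varrho_0(r)$; note these products need \emph{not} be of Dini mean oscillation (as $p,Du$ are only continuous a priori), which is exactly the obstruction flagged in Section~\ref{Sec1} and the reason $\varrho_0(r)$ must be carried explicitly rather than absorbed into the data.

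For the one-step estimate, on $B^*$ freeze $\hat A$ at $0$ and let $(w,\pi)$ solve $D_\gamma(\hat A^{\gamma\delta}(0)D_\delta w)+\nabla\pi=0$, $\operatorname{div}w=(\hat g)_{B^*}$ in $B^*$ with $w-v\in W^{1,2}_0(B^*)$ (solvable by the compatibility $\int_{B^*}\operatorname{div}w=\int_{\partial B^*}v\cdot n$). Then $(v-w,p-\pi)$ solves a Stokes system on $B^*$ with zero boundary data, divergence $\hat g-(\hat g)_{B^*}$, and right-hand side assembled from $(\hat A(0)-\hat A)D_\delta v$ and the oscillating parts of $\hat f_\gamma,\hat g$, all of which are small only in $L^1$; by the weak type-$(1,1)$ solvability of the Stokes system on a half ball from \cite{arXiv:1803.05560}, passing to the quasi-norm with $q=1/2$, its $L^q$-average on $B^*$ is $\lesssim(\|Du\|_{L^\infty(\Omega_r)}+\|p\|_{L^\infty(\Omega_r)})(\varrho_0(r)+\omega_{A^{\alpha\beta}}(r))+\|f_\alpha\|_{L^\infty(\Omega_r)}\varrho_0(r)+\omega_{f_\alpha}(r)+\omega_g(r)$. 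Since $w=v=0$ on the flat part of $\partial B^*$ we have $D_{y_i}w=0$ there for $i\ge2$, and the classical interior-and-boundary $C^1$-estimate for the homogeneous constant-coefficient Stokes system with flat boundary (cf.\ \cite{MR0641818} and its use in \cite{arXiv:1803.05560}) gives, for $0<\rho\le\operatorname{rad}(B^*)/4$,
$$\inf_{\substack{\theta\in\bR\\ \Theta\in\bR^d}}\Big(\dashint_{B_\rho^+}|D_1w-\Theta|^q+\sum_{i=2}^d|D_iw|^q+|\pi-\theta|^q\Big)^{1/q}\lesssim\frac{\rho}{\operatorname{rad}(B^*)}\inf_{\substack{\theta\in\bR\\ \Theta\in\bR^d}}\Big(\dashint_{B^*}|D_1w-\Theta|^q+\sum_{i=2}^d|D_iw|^q+|\pi-\theta|^q\Big)^{1/q}.$$
Combining these two bounds by the triangle inequality, the crude inclusion-bounds $\big(\dashint_{B_{\kappa r}^+}h^q\big)^{1/q}\lesssim\kappa^{-d/q}\big(\dashint_{B^*}h^q\big)^{1/q}$ and $\big(\dashint_{B^*}h^q\big)^{1/q}\lesssim\big(\dashint_{\mathcal T(\Omega_r)}h^q\big)^{1/q}$ (the nestings above, whose measure ratios are $\kappa^{-d}$ and $1+O(\varrho_0(r))$), and transferring back through $\mathcal T$, we arrive at
$$\Psi(0,\kappa r)\le C_0\kappa\,\Psi(0,r)+C_0\kappa^{-d/q}\Big(\big(\|Du\|_{L^\infty(\Omega_r)}+\|p\|_{L^\infty(\Omega_r)}\big)\big(\varrho_0(r)+\omega_{A^{\alpha\beta}}(r)\big)+\|f_\alpha\|_{L^\infty(\Omega_r)}\varrho_0(r)+\omega_{f_\alpha}(r)+\omega_g(r)\Big)$$
for all $0<r\le2R_1$ and all sufficiently small $\kappa$, with $C_0=C_0(d,\lambda,R_0,\varrho_0)$.

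From here the argument is identical to the proof of Lemma~\ref{171228@lem1}. Choose $\kappa_2=\kappa_2(d,\lambda,\gamma,R_0,\varrho_0)\in(0,1/8]$ with $C_0\kappa_2^{1-\gamma}\le1$; then for $0<\kappa\le\kappa_2$ the previous display reads $\Psi(0,\kappa r)\le\kappa^\gamma\Psi(0,r)+C(\cdots)$, and iterating, using the elementary inequality behind \eqref{171229@eq3} to replace $\sum_{i=1}^j\kappa^{\gamma(i-1)}\varrho_0(\kappa^{j-i}r)$ by $\kappa^{-\gamma}\tilde\varrho_0(\kappa^jr)$ (and likewise for $\omega_{A^{\alpha\beta}},\omega_{f_\alpha},\omega_g$), gives for all $j\ge0$
$$\Psi(0,\kappa^jr)\le\kappa^{\gamma j}\Psi(0,r)+C\big(\|Du\|_{L^\infty(\Omega_r)}+\|p\|_{L^\infty(\Omega_r)}\big)\big(\tilde\varrho_0(\kappa^jr)+\tilde\omega_{A^{\alpha\beta}}(\kappa^jr)\big)+C\|f_\alpha\|_{L^\infty(\Omega_r)}\tilde\varrho_0(\kappa^jr)+C\big(\tilde\omega_{f_\alpha}(\kappa^jr)+\tilde\omega_g(\kappa^jr)\big).$$
Summing over $j\ge0$ and invoking \eqref{171229@eq1a} together with the corresponding bound $\sum_{j\ge0}\tilde\varrho_0(\kappa^jr)\lesssim\int_0^r\tilde\varrho_0(t)\,t^{-1}\,dt$ yields $(i)$; and $(ii)$ follows from the same dyadic bracketing ($\kappa^{j+1}<\rho/r\le\kappa^j$) used at the end of the proof of Lemma~\ref{171228@lem1}, together with \eqref{171102@eq3}. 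Throughout one uses that $f_\alpha,g$ (being of Dini mean oscillation) and $Du,p$ are bounded, which is part of the a priori setup of the proof of Theorem~\ref{M4}.

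The main obstacle is the one-step estimate near the curved boundary, and inside it the fact --- stressed in Section~\ref{Sec1} --- that flattening generates, through the pressure gradient and the divergence constraint, lower-order terms $(\nabla_{x'}\chi)p$ and $(\nabla_{x'}\chi)Du$ that are merely continuous, hence not of Dini mean oscillation a priori (in contrast with the scalar case of \cite{MR3747493}, where flattening preserves the class). The device resolving this is to use, for each boundary point $x_0$, the distinguished coordinate system of Definition~\ref{D3} with $\nabla_{x'}\chi(x_0')=0$, so that $|\nabla_{x'}\chi|\le\varrho_{\nabla_{x'}\chi}(r)\le\varrho_0(r)$ on $\Omega_r(x_0)$ and these terms enter only through their sup-norms with the $dt/t$-summable gain $\varrho_0(r)$. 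Making this precise requires tracking how the $\chi$-dependent corrections sit inside $\hat A,\hat f_\gamma,\hat g$, organizing the domain comparisons so that all ``change of domain'' steps pass from a larger to a smaller set (avoiding spurious powers of $\varrho_0$), and dovetailing with the half-ball weak type-$(1,1)$ estimate and the flat-boundary $C^1$-estimate for constant-coefficient Stokes systems --- precisely the bookkeeping described as ``much more involved'' in Section~\ref{Sec1}.
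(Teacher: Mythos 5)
Your proposal follows essentially the same route as the paper: flatten the boundary with the same map (exploiting $\nabla_{x'}\chi(x_0')=0$ so that $|\nabla_{x'}\chi|\le\varrho_0(r)$), decompose the pulled-back solution into a constant-coefficient homogeneous piece (estimated via the $C^1$ decay on a half ball) plus a remainder with zero boundary data (estimated via the half-ball weak type-$(1,1)$ bound), track the non-Dini terms $(\nabla_{x'}\chi)p$ and $(\nabla_{x'}\chi)Du$ through sup-norms times $\varrho_0(r)$, and iterate exactly as in Lemma~\ref{171228@lem1}. The only cosmetic differences are that you freeze $\hat A$ at the boundary point rather than at its average over $B_r^+$, and you solve the homogeneous system with boundary data $v$ rather than the inhomogeneous one with zero data (equivalent splittings), while your parenthetical claim that the measure ratio $|\mathcal T(\Omega_r)|/|B^*|$ is $1+O(\varrho_0(r))$ is not quite right but immaterial since any bounded ratio suffices.
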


\begin{proof}
Recall that we use $0=(0,0')$, $x=(x_1,x')$, and $y=(y_1,y')$ to denote points in $\bR^d$.
Without loss of generality, we assume that $x_0=0\in \partial \Omega$ and $\chi(0')=0$.
We denote $B_R=B_R(0)$, $B_R^+=B_R^+(0)$, and $\Omega_R=\Omega_R(0)$.
Since $|\nabla_{x'}\chi(0')|=0$, it follows from \eqref{171101@E1} that there exists a constant $R_1=R_1(\varrho_0,R_0)\in (0, R_0)$ satisfying
\begin{equation}		\label{171101@eq4}
|\nabla_{x'}\chi(x')|\le 1/2 \quad \text{if }\, |x'|\le R_1.
\end{equation}
Let $\Gamma(y)=(y_1+\chi(y'),y')$ and $\Lambda(x)=\Gamma^{-1}(x)=(x_1-\chi(x'),x')$.
We divide the proof into several steps.

{\em{Step 1}.}
In this step, we prove that
\begin{equation}	\label{171101@D2}	
B_{R_1/2}^+\subset \Lambda (\Omega_{R_1}),
\end{equation}
\begin{equation}		\label{171101@D2a}
\Omega_{r/2}\subset \Gamma(B_{r}^+) \subset \Omega_{2r} \quad \text{for }\, r\in (0, R_1/2].
\end{equation}
To prove \eqref{171101@D2}, assume that $y\in B_{R_1/2}^+$.
Then we have
$$
\begin{aligned}
|y_1+\chi(y')|^2+|y'|^2&\le 2|y_1|^2+2|\chi(y')|^2+|y'|^2\\
&\le |y|^2+|y_1|^2+2|\chi(y')|^2\\
&< \frac{R_1^2}{2} +2|\chi(y')|^2.
\end{aligned}
$$
Notice from \eqref{171101@eq4} that
$$
|\chi(y')|^2=|\chi(y')-\chi(0')|^2\le \frac{|y'|^2}{4}<\frac{R_1^2}{4}.
$$
Combining the above two inequalities, we have $|y_1+\chi(y')|^2+|y'|^2< R_1^2$, which implies that $y\in \Lambda(\Omega_{R_1})$.
Thus we get \eqref{171101@D2}.
Using a similar argument, we have \eqref{171101@D2a}.

{\em{Step 2}.}
In this step, we use the standard technique of flattening the boundary.
We denote
$$
v(y)=u(\Gamma(y)), \quad \pi(y)=p(\Gamma(y)), \quad   b(y)=(0, D_2 \chi(y'),\ldots,D_d\chi(y'))^{\top}.
$$
Since $(u,p)$ satisfies \eqref{171006@eq2},
we have that
$$
\left\{
\begin{aligned}
D_\alpha(\cA^{\alpha\beta}D_\beta v)+\nabla \pi=D_\alpha F_\alpha +D_1 (\pi b) &\quad \text{in }\, B_{R_1}^+,\\
\operatorname{div} v =G + D_1v \cdot b &\quad \text{in }\, B_{R_1}^+,\\
v=0 &\quad \text{on }\, B_{R_1}\cap \partial \bR^d_+,
\end{aligned}
\right.
$$
where we set
$$
\cA^{\alpha\beta}=D_\ell \Lambda^\beta D_k \Lambda^\alpha {A}^{k\ell}(\Gamma), \quad F_\alpha=D_k \Lambda^\alpha {f}_k(\Gamma), \quad G=g(\Gamma)-(g)_\Omega.
$$
Let $0<r\le R_1/4$.
For a given function $f$, we denote $ \overline{f}=(f)_{B_r^+}$.
Define an elliptic operator $\cL_0$ by
$$
\cL_0 v=D_\alpha(\overline{\cA^{\alpha\beta}}D_\beta v),
$$
and observe that $(v,\pi)$ satisfies
$$
\left\{
\begin{aligned}
\cL_0 v+\nabla \pi=D_\alpha \cF_\alpha &\quad \text{in }\, B_{R_1}^+,\\
\operatorname{div} v =\cG + \overline{G} &\quad \text{in }\, B_{R_1}^+,\\
v=0 &\quad \text{on }\, B_{R_1}\cap \partial \bR^d_+,
\end{aligned}
\right.
$$
where
$$
\cF_\alpha=\big(\overline{\cA^{\alpha\beta}}-\cA^{\alpha\beta}\big)D_\beta v+F_\alpha-\overline{F_\alpha} + \delta_{1\alpha} \pi b, \quad \cG=G-\overline{G} +D_1 v\cdot b.
$$
Here, $\delta_{ij}$ is the usual Kronecker delta symbol.
We decompose
\begin{equation}		\label{171101@D1a}
(v,\pi)=(v_1,\pi_1)+(v_2,\pi_2),
\end{equation}
where $(v_1,\pi_1)\in W^{1,2}_0(B_{4r}^+)^d\times \tilde{L}^2(B_{4r}^+)$ is the weak solution of the problem
$$
\left\{
\begin{aligned}
\cL_0 v_1+\nabla \pi_1=D_\alpha(I_{B_r^+}\cF_\alpha) &\quad \text{in }\, B_{4r}^+,\\
\operatorname{div} v_1=I_{B_r^+}\cG-\big(I_{B_r^+}\cG\big)_{B_{4r}^+} &\quad \text{in }\, B_{4r}^+.
\end{aligned}
\right.
$$
Here, $I_{B_r^+}$ is the characteristic function.
By \cite[Lemma 6.5]{arXiv:1803.05560} with scaling, we have for $t>0$ that
$$
\big|\{y\in B_r^+:|Dv_1(y)|+|\pi_1(y)|>t\}\big|\lesssim_{d,\lambda} \frac{1}{t}\int_{B_r^+} ( |\cF_\alpha|+|\cG|)\,dy.
$$
This inequality implies that for $\tau>0$,
$$
\begin{aligned}
&\int_{B_r^+}(|Dv_1|+|\pi_1|)^q\,dy\\
&=\bigg(\int_0^\tau+\int_\tau^\infty \bigg)q t^{q-1} \big|\{y\in B_r^+ : |Dv_1(y)|+|\pi_1(y)|>t\}\big|\,dt\\
&\lesssim |B_r^+|\tau^q +\bigg(\int_{B_r^+}|\cF_\alpha|+|\cG|\,dy\bigg)\tau^{q-1}.
\end{aligned}
$$
By optimizing over $\tau$ and taking the $q$-th root, we have
\begin{equation}		\label{171101@D1}
\bigg(\dashint_{B_r^+} (|Dv_1|+|\pi_1|)^q\,dy\bigg)^{1/q}\lesssim \dashint_{B_r^+} (|\cF_\alpha|+|\cG|)\,dy.
\end{equation}
Since $(v_2,\pi_2)=(v,\pi)-(v_1,\pi_1)$ satisfies
$$
\left\{
\begin{aligned}
\cL_0 v_2+\nabla \pi_2=0 &\quad \text{in }\, B_r^+,\\
\operatorname{div} v_2=\big(I_{B_r^+}\cG\big)_{B_{4r}^+}+\overline{G} &\quad \text{in }\, B^+_r,\\
v_2=0 &\quad \text{on }\, B_r\cap \partial \bR^d_+,
\end{aligned}
\right.
$$
by \cite[Lemma 6.3]{arXiv:1803.05560}, we have for any $\kappa\in (0,1/2]$,
\begin{equation}		\label{171101@D1b}
\begin{aligned}
&\bigg(\dashint_{B_{\kappa r}^+}\big|D_1v_2-(D_1v_2)_{B_{\kappa r}^+}\big|^q+|D_{y'}v_2|^q+\big|\pi_2-(\pi_2)_{B_{\kappa r}^+}\big|^q\,dy\bigg)^{1/q}\\
&\lesssim_{d,\lambda} \kappa \inf_{\Theta\in \bR^d}\bigg(\dashint_{B_r^+} |D_1v_2-\Theta |^q+|D_{y'}v_2|^q\,dy\bigg)^{1/q}.
\end{aligned}
\end{equation}
Observe from \eqref{171101@D1a} that
$$
\begin{aligned}
&\bigg(\dashint_{B_{\kappa r}^+} \big|D_1v-(D_1v_2)_{B_{\kappa r}^+}\big|^q+|D_{y'}v|^q+\big|\pi-(\pi_2)_{B_{\kappa r}^+}\big|^q\,dy\bigg)^{1/q}\\
&\lesssim \bigg(\dashint_{B_{\kappa r}^+} \big|D_1v_2-(D_1v_2)_{B_{\kappa r}^+}\big|^q+|D_{y'}v_2|^q +\big|\pi_2-(\pi_2)_{B_{\kappa r}^+}\big|^q\,dy \bigg)^{1/q}\\
&\quad +\bigg(\dashint_{B_{\kappa r}^+} |Dv_1|^q+|\pi_1|^q\,dy\bigg)^{1/q}.
\end{aligned}
$$
Using this inequality together with \eqref{171101@D1} and \eqref{171101@D1b}, we obtain that
$$
\begin{aligned}
&\inf_{\substack{\theta\in \bR \\ \Theta\in \bR^d}}\bigg(\dashint_{B_{\kappa r}^+} |D_1v-\Theta |^q+|D_{y'}v|^q+|\pi-\theta |^q\,dy\bigg)^{1/q}\\
&\lesssim_{d,\lambda} \kappa\inf_{\Theta\in \bR^d} \bigg(\dashint_{B_r^+}|D_1 v-\Theta|^q+|D_{y'}v|^q\,dy\bigg)^{1/q} + \kappa^{-d/q}\dashint_{B_r^+}(|\cF_\alpha|+|\cG|)\,dy.
\end{aligned}
$$
Thus, from the definitions of $\cF_\alpha$ and $\cG$, and the fact that
$$
\dashint_{B_r^+}|b|\,dy=\dashint_{B_r^+}|b-b(0)|\,dy\le \varrho_0(r),
$$
we get
\begin{equation}		\label{171101@D4}
\begin{aligned}
&\inf_{\substack{\theta\in \bR \\ \Theta\in \bR^d}}\bigg(\dashint_{B_{\kappa r}^+} |D_1v-\Theta |^q+|D_{y'}v|^q+|\pi-\theta |^q\,dy\bigg)^{1/q}\\
&\lesssim \kappa\inf_{\Theta\in \bR^d} \bigg(\dashint_{B_r^+}|D_1 v-\Theta|^q+|D_{y'}v|^q\,dy\bigg)^{1/q}\\
&\quad + \kappa^{-d/q}\big(\|Dv\|_{L^\infty(B_r^+)}+\|\pi\|_{L^\infty(B_r^+)}\big)\bigg(\varrho_0(r)+\dashint_{B_r^+} \big|\cA^{\alpha\beta}-\overline{\cA^{\alpha\beta}}\big|\,dy\bigg)\\
&\quad + \kappa^{-d/q}\dashint_{B_r^+} \big( \big|F_\alpha-\overline{F_\alpha} \big|+\big|G-\overline{G}\big| \big)\,dy.
\end{aligned}
\end{equation}
We note that
$$
\sup_{y,z\in B_r^+}|D\Lambda(y)-D\Lambda(z)|\le \varrho_0(r), \quad \sup_{y\in B_r^+} |D\Lambda(y)|\le 1/2.
$$
Using this and following the proof of \cite[Lemma 2.1]{MR3747493}, we have
$$
\dashint_{B_r^+} \big|\cA^{\alpha\beta}-\overline{\cA^{\alpha\beta}}\big|\,dy\lesssim_{d,\lambda} \varrho_0(r)+\dashint_{B_r^+}\big|A^{\alpha\beta}(\Gamma)-\overline{A^{\alpha\beta}(\Gamma)}\big|\,dy.
$$
Hence, by the change of variables, \eqref{171101@D2a}, and $\varrho_0(r)\lesssim_{\varrho_0} \varrho_0(2r)$, we see that
$$
\dashint_{B_r^+} \big|\cA^{\alpha\beta}-\overline{\cA^{\alpha\beta}}\big|\,dy\lesssim_{d,\lambda,\varrho_0} \varrho_0(2r)+ \omega_{A^{\alpha\beta}}(2r).
$$
Similarly, we have
$$
\dashint_{B_r^+}\big(\big|F_\alpha-\overline{F_\alpha}\big|+\big|G-\overline{G}\big|\big)\,dy\lesssim_{d,\varrho_0} \|f_\alpha\|_{L^\infty(\Omega_{2r})}\varrho_0(2r)+\omega_{f_\alpha}(2r)+\omega_g(2r).
$$
Therefore, using the change of variables, \eqref{171127@eq3}, and \eqref{171101@D2a},
we get from \eqref{171101@D4} that
\begin{equation}		\label{171101@D5}
\begin{aligned}
&\inf_{\substack{\theta\in \bR \\ \Theta\in \bR^d}}\left(\dashint_{\Omega_{\kappa r/2}}|D_1 u-\Theta|^q+\sum_{i=2}^d|D_i \chi D_1u +D_i u|^q+|p-\theta|^q\,dx\right)^{1/q}\\
&\lesssim_{d,\lambda, R_0, \varrho_0} \kappa\inf_{\Theta\in \bR^d} \left(\dashint_{\Omega_{2r}}|D_1 u-\Theta|^q+\sum_{i=2}^d|D_i \chi D_1u +D_i u|^q\,dx\right)^{1/q}\\
&\quad +\kappa^{-d/q} \big(\|Du\|_{L^\infty(\Omega_{2r})}+\|p\|_{L^\infty(\Omega_{2r})}\big)(\varrho_0(2r)+\omega_{A^{\alpha\beta}}(2r))\\
&\quad +\kappa^{-d/q}(\|f_\alpha\|_{L^\infty(\Omega_{2r})}\varrho_0(2r)+\omega_{f_\alpha}(2r)+\omega_{g}(2r)).
\end{aligned}
\end{equation}
 for $0<r\le R_1/4$ and $\kappa\in (0,1/2]$.

{\em{Step 3}.}
We are ready to prove the lemma.
By replacing $\kappa/4$, $2r$, and $R_1/2$ by $\kappa$, $r$, and $2R_1$ in \eqref{171101@D5}, we obtain for $0<r\le 2R_1$ and $\kappa\in (0,1/8]$ that
$$
\begin{aligned}
\Psi(0,\kappa r)
&\le C_0 \kappa \Psi(0, r)\\
&\quad + C_0\kappa^{-d/q} \big(\|Du\|_{L^\infty(\Omega_{r})}+\|p\|_{L^\infty(\Omega_{r})}\big)(\varrho_0(r)+\omega_{A^{\alpha\beta}}(r))\\
&\quad +C_0\kappa^{-d/q}(\|f_\alpha\|_{L^\infty(\Omega_r)}\varrho_0(r)+\omega_{f_\alpha}(r)+\omega_{g}(r)),
\end{aligned}
$$
where $C_0=C_0(d,\lambda, R_0,\varrho_0)>0$.
We take $\kappa_2=\kappa_2(d,\lambda,\gamma,R_0,\varrho_0)\in (0,1/8]$ so that
$C_0\kappa_2^{1-\gamma}\le 1$.
Then for any $0<\kappa\le \kappa_2$,
we have
$$
\begin{aligned}
\Psi(0,\kappa r)
& \le \kappa^{\gamma} \Psi(0, r)+ C \big(\|Du\|_{L^\infty(\Omega_{r})}+\|p\|_{L^\infty(\Omega_{r})}\big)(\varrho_0(r)+\omega_{A^{\alpha\beta}}(r))\\
&\quad +C(\|f_\alpha\|_{L^\infty(\Omega_r)}\varrho_0(r)+\omega_{f_\alpha}(r)+\omega_{g}(r)),
\end{aligned}
$$
where $C=C(d,\lambda,\gamma, R_0,\varrho_0,\kappa)>0$.
By iterating, we obtain for $j\in \{1,2,\ldots\}$ that
\begin{equation}		\label{171229@eq3a}
\begin{aligned}
\Psi(0, \kappa^j r)
& \le \kappa^{\gamma j} \Psi(0, r) \\
&\quad + C \big(\|Du\|_{L^\infty(\Omega_{r})}+\|p\|_{L^\infty(\Omega_{r})}\big)(\tilde{\varrho}_0(\kappa^j r)+\tilde{\omega}_{A^{\alpha\beta}}(\kappa^j r))\\
&\quad +C(\|f_\alpha\|_{L^\infty(\Omega_r)} \tilde{\varrho}_0(\kappa^j r)+\tilde{\omega}_{f_\alpha}(\kappa^j r)+\tilde{\omega}_{g}(\kappa^j r)),
\end{aligned}
\end{equation}
where we used \eqref{171229@eq3} and
$$
\sum_{i=1}^j \kappa^{\gamma(i-1)}\varrho_0(\kappa^{j-i}r)\le \kappa^{-\gamma} \tilde{\varrho}_{0}(\kappa^j r).
$$
The estimate \eqref{171229@eq3a} corresponds to \eqref{171229@eq2}.
The rest of the proof is identical to that of Lemma \ref{171228@lem1} and is omitted.
\end{proof}

By combining Lemmas \ref{171228@lem1} and \ref{171101@lem1}, we obtain the following $L^q$-mean oscillation estimates for $Du$ and $p$.

\begin{lemma}		\label{171102@lem5}
Let $x_0\in \Omega$ and $\gamma\in (0,1)$.
Under the same hypothesis of Theorem \ref{M4} $(a)$, if $R_1=R_1(\varrho_0,R_0)$ is the constant from Lemma \ref{171101@lem1} and
$$
\kappa=\kappa(d,\lambda,\gamma,R_0, \varrho_0)=\min\{\kappa_1,\kappa_2\},
$$
where $\kappa_1$ and $\kappa_2$ are constants from Lemmas \ref{171228@lem1} and \ref{171101@lem1},
then  the following hold.

\begin{enumerate}[$(i)$]
\item
For any $0<r\le R_1$, we have
\begin{equation}		\label{171103@eq5}
\begin{aligned}
&\sum_{j=0}^\infty\Phi(x_0, \kappa^j r)\lesssim_{d,\lambda,\gamma, R_0, \varrho_0}  r^{-d} \big(\|Du\|_{L^1(\Omega_{3r}(x_0))}+\|p\|_{L^1(\Omega_{3r}(x_0))}\big) \\
&\quad +\big(\|Du\|_{L^\infty(\Omega_{3r}(x_0))}+\|p\|_{L^\infty(\Omega_{3r}(x_0))}\big) \int_0^r \frac{\varrho^\sharp_0(t) + \omega^\sharp_{A^{\alpha\beta}}(t)}{t}\,dt \\
&\quad + \|f_\alpha\|_{L^\infty(\Omega_{3r}(x_0))}\int_0^r \frac{\varrho_0^\sharp(t)}{t}\,dt+\int_0^r \frac{\omega_{f_\alpha}^\sharp(t)+{\omega}_{g}^\sharp(t)}{t}\,dt,
\end{aligned}
\end{equation}
where each integration is finite; see Remark \ref{180106@rmk1}
\item
For any $0<\rho\le r\le R_1$,  we have
\begin{equation}		\label{171102@eq2a}
\begin{aligned}
&\Phi(x_0, \rho)\lesssim_{d,\lambda,\gamma,R_0, \varrho_0} \left(\frac{\rho}{r}\right)^\gamma r^{-d} \big(\|Du\|_{L^1(\Omega_{3r}(x_0))}+\|p\|_{L^1(\Omega_{3r}(x_0))}\big)\\
&\quad +\big(\|Du\|_{L^\infty(\Omega_{3r}(x_0))}+\|p\|_{L^\infty(\Omega_{3r}(x_0))}\big)(\varrho^\sharp_0(\rho)+{\omega}^\sharp_{A^{\alpha\beta}}(\rho))\\
&\quad + \|f_\alpha\|_{L^\infty(\Omega_{3r}(x_0))}{\varrho}_0^\sharp(\rho)+{\omega}_{f_\alpha}^\sharp(\rho)+{\omega}_{g}^\sharp(\rho).
\end{aligned}
\end{equation}
\end{enumerate}
Here, we set
$$
\varrho^\sharp_0(\rho):=\sup_{\rho\le R\le R_1}\left(\frac{\rho}{R}\right)^\gamma\tilde{\varrho}_0(R), \quad
\omega^\sharp_{\bullet}(\rho):=\sup_{\rho\le R\le R_1}\left(\frac{\rho}{R}\right)^\gamma \tilde{\omega}_{\bullet}(R).
$$
\end{lemma}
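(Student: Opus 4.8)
\emph{Proof plan.} Write $d_0:=\operatorname{dist}(x_0,\partial\Omega)$, fix a closest point $\bar x_0\in\partial\Omega$ with $|x_0-\bar x_0|=d_0$, and fix the $C^{1,\rm{Dini}}$ function $\chi$ and coordinate system attached to $\bar x_0$ by Definition \ref{D3}; $\Psi(\bar x_0,\cdot)$ is taken in that system. The idea is to obtain both estimates by splitting the scales at the level $\sim d_0$: at scales below $d_0$ the ball $B_\sigma(x_0)$ lies in $\Omega$ and Lemma \ref{171228@lem1} applies, while at scales comparable to or larger than $d_0$ one compares $B_\sigma(x_0)$ with a ball around $\bar x_0$ and uses Lemma \ref{171101@lem1}. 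Three elementary facts drive this. First, since $q=\tfrac12<1$, Jensen's inequality gives $(\dashint_E|h|^q)^{1/q}\le\dashint_E|h|$; with \eqref{171127@eq3} and the trivial competitors $\theta=\Theta=0$ this yields $\Phi(y,\sigma)\lesssim_{d,R_0,\varrho_0}\sigma^{-d}(\|Du\|_{L^1(\Omega_\sigma(y))}+\|p\|_{L^1(\Omega_\sigma(y))})$ for any $y\in\overline\Omega$, and likewise $\Psi(\bar x_0,\sigma)\lesssim\sigma^{-d}(\|Du\|_{L^1(\Omega_\sigma(\bar x_0))}+\|p\|_{L^1(\Omega_\sigma(\bar x_0))})$, the cross terms of $\Psi$ being absorbed using that $|\nabla_{x'}\chi|$ is bounded on $B_{2R_1}(\bar x_0)$ by \eqref{171101@E1}. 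Second, by Remark \ref{180106@rmk1} the functions $\varrho_0^\sharp,\omega^\sharp_{A^{\alpha\beta}},\omega^\sharp_{f_\alpha},\omega^\sharp_g$ are Dini, dominate $\tilde\varrho_0,\tilde\omega_{A^{\alpha\beta}},\tilde\omega_{f_\alpha},\tilde\omega_g$, and satisfy $(\rho/R)^\gamma\tilde\varrho_0(R)\le\varrho_0^\sharp(\rho)$ and $(\rho/R)^\gamma\tilde\omega_\bullet(R)\le\omega^\sharp_\bullet(\rho)$ for $\rho\le R\le R_1$; combined with $\tilde\varrho_0(r)\lesssim\int_0^r t^{-1}\tilde\varrho_0(t)\,dt$ (valid for Dini functions by \eqref{171006@eq1}), this absorbs all geometric factors $(\kappa^j)^\gamma$ and all truncation losses created by chaining scales. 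Third, if $\sigma\ge c\,d_0$ for a fixed $c>0$, then $B_\sigma(x_0)\subset B_{\sigma+d_0}(\bar x_0)$ with $\sigma+d_0\le(1+c^{-1})\sigma$, and since $|\nabla_{x'}\chi(\bar x_0')|=0$ one has $|D_i\chi(x')|\le\varrho_0(\sigma+d_0)$ on $\Omega_{\sigma+d_0}(\bar x_0)$; writing $D_iu=(D_i\chi D_1u+D_iu)-D_i\chi D_1u$ and using for $\Phi(x_0,\sigma)$ the matrix whose first column equals the $\Psi$-minimizer $\Theta$ and whose other columns vanish gives
\[
\Phi(x_0,\sigma)\lesssim_{c,d,R_0,\varrho_0}\Psi(\bar x_0,\sigma+d_0)+\varrho_0(\sigma+d_0)\,\|Du\|_{L^\infty(\Omega_{\sigma+d_0}(\bar x_0))}.
\]

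To prove $(i)$, distinguish two cases. If $d_0\ge r$, then $\kappa^jr\le\kappa r\le d_0/4$ for all $j\ge1$, so Lemma \ref{171228@lem1}$(i)$ at scale $\kappa r$ bounds $\sum_{j\ge1}\Phi(x_0,\kappa^jr)$ by $\Phi(x_0,\kappa r)$ plus moduli integrals over $(0,\kappa r)$, while the $j=0$ term is handled by the first fact; since $B_{\kappa r}(x_0)\subset\Omega_{3r}(x_0)$ and $\tilde\omega_\bullet\le\omega^\sharp_\bullet$, $\varrho_0^\sharp\ge0$, this gives \eqref{171103@eq5}. If $d_0<r$, then $\Omega_{\sigma+d_0}(\bar x_0)\subset B_{\sigma+2d_0}(x_0)\cap\Omega\subset\Omega_{3r}(x_0)$ for every $\sigma\le r$ — this is precisely why the dilate $3r$ appears in the statement. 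Let $\sigma_\ast$ be the largest scale of the form $\kappa^jr$ with $\kappa^jr\le d_0/4$. For the scales $\kappa^jr>\sigma_\ast$ (all of which are $\ge\kappa d_0/4$ and $\le r$): apply the third fact with $c=\kappa/4$ to replace each $\Phi(x_0,\kappa^jr)$ by $\Psi(\bar x_0,\kappa^jr+d_0)+\varrho_0(\kappa^jr+d_0)\|Du\|_{L^\infty(\Omega_{3r}(x_0))}$, sum over $j$, and use Lemma \ref{171101@lem1}$(i)$ at scale $2r\le2R_1$ together with the first fact for $\Psi(\bar x_0,2r)$, replacing $\int_0^{2r}$ by $\int_0^r$ plus a $\tilde\varrho_0(r)$-tail absorbed by the second fact. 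For the scales $\kappa^jr\le\sigma_\ast$: Lemma \ref{171228@lem1}$(i)$ at scale $\sigma_\ast$ reduces their sum to $\Phi(x_0,\sigma_\ast)$ plus moduli integrals over $(0,\sigma_\ast)$, and $\Phi(x_0,\sigma_\ast)$ is again controlled by the third fact ($c=\kappa/4$) followed by Lemma \ref{171101@lem1}$(ii)$ decaying $\Psi(\bar x_0,\sigma_\ast+d_0)$ from scale $2r$ and the first fact. Adding the (at most) three groups yields \eqref{171103@eq5}.

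For $(ii)$ one argues in the same way: if $d_0\ge r$, apply Lemma \ref{171228@lem1}$(ii)$ (with $r'=\kappa r$ when $\rho\le\kappa r$, and the first fact when $\kappa r<\rho\le r$); if $d_0<r$ and $\rho\ge\kappa d_0/4$, combine the third fact with Lemma \ref{171101@lem1}$(ii)$ (decay from $2r$) and the first fact; and if $d_0<r$ and $\rho<\kappa d_0/4$, first use Lemma \ref{171228@lem1}$(ii)$ to decay from $\sigma_\ast$ down to $\rho$, then apply the previous step to $\Phi(x_0,\sigma_\ast)$, multiplying the two $(\cdot/\cdot)^\gamma$ powers and collapsing the intermediate moduli into $\varrho_0^\sharp(\rho)+\omega^\sharp_\bullet(\rho)$ via the second fact. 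The finiteness of the integrals in $(i)$ is exactly the Dini property of the $\sharp$-moduli.

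The substantive input is entirely Lemmas \ref{171228@lem1} and \ref{171101@lem1}, plus the elementary observations that $|\nabla_{x'}\chi(\bar x_0')|=0$ forces $|D_i\chi|\le\varrho_0$ near $\bar x_0$ and that Jensen passes from the $L^q$ to the $L^1$ average. The main obstacle I expect is purely organizational: managing the scale split around $d_0$ (in particular the handful of scales lying between $\sigma_\ast$ and $d_0$), keeping every comparison $B_\bullet(x_0)\leftrightarrow B_\bullet(\bar x_0)$ inside the single dilate $\Omega_{3r}(x_0)$ allotted by the statement, chaining the $(\cdot/\cdot)^\gamma$ factors correctly, and verifying that every error modulus produced along the way is majorized by one of $\varrho_0^\sharp,\omega^\sharp_{A^{\alpha\beta}},\omega^\sharp_{f_\alpha},\omega^\sharp_g$ — together with the (already isolated in Remark \ref{180106@rmk1}) verification that these $\sharp$-moduli are Dini.
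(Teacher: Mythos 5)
Your plan is sound and its core is essentially the paper's argument: a case split around $d_0=\operatorname{dist}(x_0,\partial\Omega)$, with the interior Lemma \ref{171228@lem1} controlling the scales below $d_0$, the boundary Lemma \ref{171101@lem1} controlling the larger scales via the comparison $\Phi(x_0,\sigma)\lesssim\Psi(\bar x_0,\sigma+d_0)+\varrho_0(\sigma+d_0)\|Du\|_{L^\infty}$ (the paper uses $\Psi(y_0,5\rho)$, same device), the trivial bound $\Phi\lesssim\sigma^{-d}\|\mathcal U\|_{L^1}$ to seed the top scale, and the $\sharp$-moduli from Remark \ref{180106@rmk1} to absorb the chaining losses. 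The one genuine organizational difference is that the paper proves only the single-scale decay estimate $(ii)$ via the three cases $r\le d_0$, $d_0\le 4\rho$, $4\rho<d_0<r$, and then derives $(i)$ in two lines by plugging $\rho=\kappa^j r$ into $(ii)$, summing over $j$, and invoking \eqref{171103@eq1}--\eqref{171103@eq1a}; whereas you attack $(i)$ directly with a parallel scale split and sum. That route works but is more delicate (you must track that $\kappa^jr+d_0\sim\kappa^jr$ for $j<j_\ast$, keep the shifted scales $\kappa^jr+d_0$ inside $[0,2R_1]$, and manage that the scales fed to Lemma \ref{171101@lem1}$(i)$ are not exactly of the form $\kappa^j\cdot\text{const}$); the paper's ``prove $(ii)$ first, then sum'' ordering sidesteps these bookkeeping issues at no cost, and I would recommend adopting it when you write the details.
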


\begin{remark}		\label{180106@rmk1}
Note that $\varrho^\sharp_0$ is a Dini function; see \cite[pp. 463--464]{MR3747493}.
By the definition of $\varrho_0^\sharp$ and \eqref{171102@eq3}, we have
$$
2^{-\gamma}\varrho_0^\sharp(t)\le  \varrho_0^\sharp(s)\lesssim_{\gamma,\varrho_0} \varrho_0^\sharp(t), \quad \frac{t}{2}\le s\le t \le R_1.
$$
Therefore, using the comparison principle for Riemann integrals, we get
\begin{equation}		\label{171103@eq1}
\sum_{j=0}^\infty \varrho_0^\sharp(\kappa^j r)\lesssim_{\gamma, \varrho_0,\kappa}\int_0^r \frac{\varrho_0^\sharp(t)}{t}\,dt<\infty, \quad 0<r \le R_1.
\end{equation}
Similarly, we have
\begin{equation}		\label{171103@eq1a}
\sum_{j=0}^\infty \omega_{f}^\sharp(\kappa^j r)\lesssim_{d,\gamma,R_0, \varrho_0,\kappa}\int_0^r \frac{\omega_{f}^\sharp(t)}{t}\,dt<\infty, \quad 0<r\le R_1,
\end{equation}
for any $f$ having Dini mean oscillation in $\Omega$.
\end{remark}

\begin{proof}[Proof of Lemma \ref{171102@lem5}]
The estimate \eqref{171103@eq5} is an easy consequence of the estimate \eqref{171102@eq2a}.
Indeed, for $j\in \{0,1,2,\ldots\}$, by taking $\rho=\kappa^j r$ in \eqref{171102@eq2a}, we have
\begin{equation}		\label{171103@eq5a}
\begin{aligned}
&\Phi(x_0, \kappa^j r)\lesssim \kappa^{\gamma j} r^{-d} \big(\|Du\|_{L^1(\Omega_{3r}(x_0))}+\|p\|_{L^1(\Omega_{3r}(x_0))}\big)\\
&\quad +\big(\|Du\|_{L^\infty(\Omega_{3r}(x_0))}+\|p\|_{L^\infty(\Omega_{3r}(x_0))}\big)(\varrho^\sharp_0(\kappa^j r)+{\omega}^\sharp_{A^{\alpha\beta}}(\kappa^j r))\\
&\quad + \|f_\alpha\|_{L^\infty(\Omega_{3r}(x_0))}{\varrho}_0^\sharp(\kappa^j r)+{\omega}_{f_\alpha}^\sharp(\kappa^j r)+{\omega}_{g}^\sharp(\kappa^j r).
\end{aligned}
\end{equation}
Taking the summations of both sides of \eqref{171103@eq5a} with respect to $j=0,1,2,\ldots$, and using  \eqref{171103@eq1} and \eqref{171103@eq1a}, we conclude \eqref{171103@eq5}.

To complete the proof,
it suffices to prove that \eqref{171102@eq2a} holds.
Without loss of generality, we assume that $x_0=0\in {\Omega}$.
We denote $B_R=B_R(0)$ and $\Omega_R=\Omega_R(0)$.
Let $0<\rho\le r \le R_1$.
Note that if $r/6< \rho\le r$, then \eqref{171102@eq2a} follows from the definition of $\Phi$.
Hence we only need to consider the case of $0<\rho\le r/6$.
We consider the following three cases:
$$
r\le \operatorname{dist}(0,\partial \Omega), \quad \operatorname{dist}(0,\partial \Omega)\le 4\rho , \quad 4\rho  <  \operatorname{dist}(0,\partial \Omega) <  r.
$$
\begin{enumerate}[i.]
\item
$r\le \operatorname{dist}(0,\partial \Omega)$:
Set $R=r/4$.
Since $B_{4R}\subset \Omega$,
by Lemma \ref{171228@lem1} $(ii)$, we have
$$
\Phi(0,\rho)\lesssim \left(\frac{\rho}{R}\right)^{\gamma} \Phi(0, R) +\|Du\|_{L^\infty(B_R)}\tilde{\omega}_{A^{\alpha\beta}}(\rho)+\tilde{\omega}_{f_\alpha}(\rho)+\tilde{\omega}_{g}(\rho).
$$
Thus from the fact that
$$
\tilde{\omega}_{\bullet}(\rho)\le \omega^\sharp_{\bullet}(\rho), \quad \Phi(0, R)\lesssim R^{-d}\big(\|Du\|_{L^1(\Omega_R)}+\|p\|_{L^1(\Omega_R)}\big),
$$
we get \eqref{171102@eq2a}.

\item
$\operatorname{dist}(0,\partial \Omega)\le 4\rho$:
We take $y_0\in \partial \Omega$ such that $\operatorname{dist}(0,\partial \Omega)=|y_0|$.
We fix a $C^{1,\rm{Dini}}$ function $\chi$ and a coordinate system associated with $y_0$ satisfying \eqref{171101@E1} and \eqref{171101@E2}.
In this coordinate system, using \eqref{171127@eq3} and the fact that $\Omega_\rho \subset \Omega_{5\rho}(y_0)$, we have
$$
\Phi(0, \rho) \lesssim_{d,R_0, \varrho_0} \Psi(y_0,5\rho)+\bigg(\dashint_{\Omega_{5\rho}(y_0)} \sum_{i=2}^d|D_i \chi D_1 u|^q\,dx\bigg)^{1/q},
$$
where $\Psi$ is given in Lemma \ref{171101@lem1}.
Note that
$$
|D_{x'} \chi(x')|=|D_{x'}\chi(x')-D_{x'} \chi(y_0')|\le \varrho_0(5\rho), \quad x'\in B_{5\rho}'(y_0').
$$
Using this together with Lemma \ref{171101@lem1} $(ii)$, we obtain that
\begin{align}		
\label{171129@eq2}
\Phi(0,\rho)&\lesssim \Psi(y_0, 5\rho)+ \varrho_0(5\rho) \|Du\|_{L^\infty(\Omega_{5\rho}(y_0))}\\
\nonumber
&\lesssim \left(\frac{\rho}{r}\right)^{\gamma}\Psi(y_0,r)+\big(\||Du|+|p|\|_{L^\infty(\Omega_r(y_0))}\big) \big(\tilde{\varrho}_0(5\rho)+\tilde{\omega}_{A^{\alpha\beta}}(5\rho)\big)\\
\label{171129@eq1}
& \quad + \|f_\alpha\|_{L^\infty(\Omega_r(y_0))}\tilde{\varrho}_0(5\rho)+\tilde{\omega}_{f_\alpha}(5\rho)+\tilde{\omega}_g(5\rho).
\end{align}

Since it holds that
$$
\Omega_r(y_0)\subset \Omega_{3r}, \quad \tilde{\varrho}(5\rho)\lesssim_{\gamma} \varrho_0^\sharp(\rho), \quad \tilde{\omega}_{\bullet}(5\rho)\lesssim_{\gamma} \omega_{\bullet}^\sharp(\rho),
$$
$$
\Psi(y_0, r)\lesssim r^{-d}\big(\|Du\|_{L^1(\Omega_{3r})}+\|p\|_{L^1(\Omega_{3r})}\big),
$$
we get \eqref{171102@eq2a} from \eqref{171129@eq1}.

\item
$4\rho < \operatorname{dist}(0, \partial \Omega) < r$:
Set $R=\operatorname{dist}(0, \partial \Omega)/4$, and observe that
$$
\rho < R, \quad 5R < 2r\le 2R_1.
$$
Since $B_{4R}\subset \Omega$, by Lemma \ref{171228@lem1} $(ii)$, we have
\begin{equation}		\label{171129@eq1b}
\Phi(0,\rho)\lesssim \left(\frac{\rho}{R}\right)^{\gamma} \Phi(0, R) + \|Du\|_{L^\infty(B_R)} \tilde{\omega}_{A^{\alpha\beta}}(\rho)+\tilde{\omega}_{f_\alpha}(\rho)+\tilde{\omega}_{g}(\rho).
\end{equation}
We take $y_0\in \partial \Omega$ such that $\operatorname{dist}(0,\partial \Omega)=|y_0|$.
We fix a $C^{1,\rm{Dini}}$ function $\chi$ and a coordinate system associated with $y_0$ satisfying \eqref{171101@E1} and \eqref{171101@E2}.
In this coordinate system, similar to \eqref{171129@eq1}, we have
\begin{align}
\nonumber
\Phi(0, R)
&\lesssim \Psi(y_0, 5R)+\varrho_0(5R)\|Du\|_{L^\infty(\Omega_{5R}(y_0))}\\
\nonumber
&\lesssim \left(\frac{R}{r}\right)^\gamma \Psi(y_0, 2r)+\big(\||Du|+|p|\|_{L^\infty(\Omega_{2r}(y_0))}\big)\big(\tilde{\varrho}_0(5R)+\tilde{\omega}_{A^{\alpha\beta}}(5R)\big)\\
\label{171129@eq1a}
&\quad + \|f_\alpha\|_{L^\infty(\Omega_{2r}(y_0))}\tilde{\varrho}_0(5R)+\tilde{\omega}_{f_\alpha}(5R)+\tilde{\omega}_g(5R).
\end{align}
Combining \eqref{171129@eq1b} and \eqref{171129@eq1a},
and using the fact that
$$
\Omega_{2r}(y_0)\subset \Omega_{3r}, \quad \Psi(y_0, 2r)\lesssim r^{-d}\big(\|Du\|_{L^1(\Omega_{3r})}+\|p\|_{L^1(\Omega_{3r})}\big),
$$
we get \eqref{171102@eq2a}.
\end{enumerate}
The lemma is proved.
\end{proof}

Now we are ready to prove the assertion $(a)$ in the theorem.

\begin{proof}[Proof of Theorem \ref{M4} $(a)$]
In this proof, we fix $\gamma\in (0, 1)$.
Let $R_1=R_1(\varrho_0, R_0)\in (0, R_0/4)$ be the constant from Lemma \ref{171101@lem1} and $\kappa=\kappa(d,\lambda, \gamma,R_0, \varrho_0)\in (0,1/8]$  be the constant from Lemma \ref{171102@lem5}.
We denote
$$
\cU=|Du|+|p|, \quad \cG(r)=\int_0^r \frac{\omega^\sharp_{f_\alpha}(t)+\omega^\sharp_{g}(t)}{t}\,dt.
$$

We first derive $L^\infty$-estimates for $Du$ and $p$.
Let $x_0\in {\Omega}$ and $0<r \le R_1$.
We take $\theta_{x_0, r}\in \bR$ and $\Theta_{x_0,r}\in \bR^{d\times d}$ to be such that
$$
\Phi(x_0, r)=\bigg(\dashint_{\Omega_r(x_0)} |Du-\Theta_{x_0, r}|^q+|p-\theta_{x_0,r}|^q\, dx\bigg)^{1/q}.
$$
Similarly, we find $\theta_{x_0,\kappa^ir}\in \bR$ and $\Theta_{x_0, \kappa^i r}\in \bR^{d\times d}$ for $i\in \{1,2,\ldots\}$.
Recall the assumption that $(u,p)\in C^1(\overline{\Omega})^d\times C(\overline{\Omega})$.
Thus, since the right-hand side of \eqref{171103@eq5a} goes to zero as $j\to \infty$, we see that
\begin{equation}		\label{171103@eq7d}
\lim_{i\to \infty} \theta_{x_0,\kappa^i r}=p(x_0), \quad \lim_{i\to \infty}\Theta_{x_0, \kappa^i r}=Du(x_0).
\end{equation}
By averaging the inequality
$$
|\Theta_{x_0, \kappa r}-\Theta_{x_0,r}|^q\le |Du-\Theta_{x_0,\kappa r}|^q+|Du-\Theta_{x_0,r}|^q
$$
on $\Omega_{\kappa r}(x_0)$ and taking the $q$-th root, we have
$$
|\Theta_{x_0,\kappa r}-\Theta_{x_0,r}| \lesssim \Phi(x_0, \kappa r)+\Phi(x_0,r).
$$
Similarly, we have $
|\theta_{x_0,\kappa r}-\theta_{x_0,r}| \lesssim \Phi(x_0, \kappa r)+\Phi(x_0,r)$.
Thus by iterating and \eqref{171103@eq7d}, we have
\begin{equation}		\label{171103@eq7e}
|Du(x_0)-\Theta_{x_0,r}|+|p(x_0)-\theta_{x_0,r}| \lesssim \sum_{j=0}^\infty \Phi(x_0,\kappa^j r).
\end{equation}
This inequality together with Lemma \ref{171102@lem5} $(i)$ implies
$$
\begin{aligned}
&|Du(x_0)-\Theta_{x_0,r}|+|p(x_0)-\theta_{x_0,r}|  \\
&\lesssim  r^{-d} \|\cU\|_{L^1(\Omega_{3r}(x_0))}+\|\cU\|_{L^\infty(\Omega_{3r}(x_0))} \int_0^r \frac{\varrho^\sharp_0(t) + \omega^\sharp_{A^{\alpha\beta}}(t)}{t}\,dt \\
&\quad + \|f_\alpha\|_{L^\infty(\Omega_{3r}(x_0))}\int_0^r \frac{\varrho_0^\sharp(t)}{t}\,dt+\cG(r).
\end{aligned}
$$
Note that
$$
|\Theta_{x_0,r}|+|\theta_{x_0,r}|\lesssim \Phi(x_0,r)+r^{-d}\|\cU\|_{L^1(\Omega_r(x_0))} \lesssim r^{-d}\|\cU\|_{L^1(\Omega_r(x_0))}.
$$
Combining the above two inequalities, we have
$$
\begin{aligned}
\cU(x_0)  &\le C_1 r^{-d} \|\cU\|_{L^1(\Omega_{3r}(x_0))}+ C_1 \|\cU\|_{L^\infty(\Omega_{3r}(x_0))} \int_0^r \frac{\varrho^\sharp_0(t) + \omega^\sharp_{A^{\alpha\beta}}(t)}{t}\,dt \\
&\quad + C_1 \|f_\alpha\|_{L^\infty(\Omega_{3r}(x_0))}\int_0^r \frac{\varrho_0^\sharp(t)}{t}\,dt+ C_1\cG(r),
\end{aligned}
$$
where $C_1=C_1(d,\lambda,\gamma, R_0, \varrho_0)$.
We take $r_0\in (0, R_1]$  so that
$$
C_1\int_0^{r_0}  \frac{\varrho^\sharp_0(t) + \omega^\sharp_{A^{\alpha\beta}}(t)}{t}\,dt\le \frac{1}{3^d}.
$$
Then for any $x_0 \in \Omega$ and $0 < r \le r_0$, we have that
\begin{equation}		\label{171127@eq2}
\begin{aligned}
\cU(x_0)  &\le C_1 r^{-d} \|\cU\|_{L^1(\Omega_{3r}(x_0))}+ 3^{-d} \|\cU\|_{L^\infty(\Omega_{3r}(x_0))}  \\
&\quad + 3^{-d} \|f_\alpha\|_{L^\infty(\Omega_{3r}(x_0))}+ C_1\cG(r).
\end{aligned}
\end{equation}
Here, the constant $r_0$ depends only on  $d$, $\lambda$, $\gamma$, $R_0$, $\varrho_0$, and $\omega_{A^{\alpha\beta}}$.

Now let us fix $x_0\in \Omega$ and $0<R\le R_1$.
For $k\in \{2,3,\ldots\}$, we denote $r_k=R(1-2^{1-k})$.
Since $r_{k+1}-r_k=2^{-k}R$, we have $\Omega_{4r}(y)\subset \Omega_{r_{k+1}}(x_0)$ for any $y\in \Omega_{r_k}(x_0)$ and $r= 2^{-k-2} R$.
We take $k_0$ sufficiently large such that $ 2^{-k_0-2} R_1\le r_0$.
Then by \eqref{171127@eq2} with $r=2^{-k-2} R$, we have for $k\ge k_0$ that
$$
\begin{aligned}
\|\cU\|_{L^\infty(\Omega_{r_{k}}(x_0))} &\le C_1 \left(\frac{2^{k+2}}{R}\right)^{d} \|\cU\|_{L^1(\Omega_{r_{k+1}}(x_0))}+3^{-d} \|\cU\|_{L^\infty(\Omega_{r_{k+1}}(x_0))}\\
&\quad +3^{-d} \|f_\alpha\|_{L^\infty(\Omega_{r_{k+1}}(x_0))}+C_1\cG(R).
\end{aligned}
$$
By multiplying both sides of the above inequality by $3^{-dk}$ and summing the terms with respect to $k=k_0,k_0+1,\ldots$, we see that
$$
\begin{aligned}
\sum_{k=k_0}^\infty 3^{-dk}\|\cU\|_{L^\infty(\Omega_{r_{k}}(x_0))}
&\le  C R^{-d} \|\cU\|_{L^1(\Omega_{R}(x_0))}
+\sum_{k=k_0+1}^\infty 3^{-dk} \|\cU\|_{L^\infty(\Omega_{r_{k}}(x_0))}\\
&\quad +C \|f_\alpha\|_{L^\infty(\Omega_{R}(x_0))}+C\cG(R),
\end{aligned}
$$
where each summation is finite and $C=C(d,\lambda, \gamma,R_0, \varrho_0)>0$.
By subtracting
$$
\sum_{k=k_0+1}^\infty 3^{-dk}\|\cU\|_{L^\infty(\Omega_{r_k}(x_0))}
$$
from both sides of the above inequality, we get the following $L^\infty$-estimate for $Du$ and $p$:
\begin{equation}		\label{171103@eq7}
\|\cU\|_{L^\infty(\Omega_{R/2}(x_0))} \le C \big(R^{-d} \|\cU\|_{L^1(\Omega_{R}(x_0))} +\|f_\alpha\|_{L^\infty(\Omega_{R}(x_0))}+\cG(R)\big)
\end{equation}
for any $x_0\in {\Omega}$ and $R\in (0,R_1]$,
where $C=C(d,\lambda,\gamma, R_0, \varrho_0, \omega_{A^{\alpha\beta}})$.

Next, we shall derive estimates of the modulus of continuity of $Du$ and $p$.
We first claim that for any $x\in \Omega$ and $0<\rho\le r\le R_1/4$, we have
\begin{equation}		\label{171103@eq7a}
\begin{aligned}
&\sum_{j=0}^\infty \Phi(x,\kappa^j \rho) \lesssim_{d,\lambda,\gamma,R_0, \varrho_0} \left(\frac{\rho}{r}\right)^{\gamma} r^{-d}\|\cU\|_{L^1(\Omega_{10r}(x))}\\
&\quad +\big(\|\cU\|_{L^\infty(\Omega_{10r}(x))}+\|f_\alpha\|_{L^\infty(\Omega_{10r}(x))}\big)\int_0^\rho \frac{\varrho^\sharp_0(t)+\omega^\sharp_{A^{\alpha\beta}}(t)}{t}\,dt+\cG(\rho).
\end{aligned}
\end{equation}
We consider the following two cases:
$$
4\rho\le \operatorname{dist}(x,\partial \Omega) \quad \text{and} \quad 4\rho>\operatorname{dist}(x,\partial \Omega).
$$
\begin{enumerate}[i.]
\item
$4\rho\le \operatorname{dist}(x,\partial \Omega)$:
Since $B_{4\rho}(x) \subset \Omega$,
by Lemma \ref{171228@lem1} $(i)$, we have
$$
\begin{aligned}
\sum_{j=0}^\infty \Phi(x, \kappa^j \rho)&\lesssim \Phi(x,\rho)+\|Du\|_{L^\infty(B_\rho(x))}\int_0^\rho \frac{\tilde{\omega}_{A^{\alpha\beta}}(t)}{t}\,dt\\
&\quad +\int_0^\rho\frac{\tilde{\omega}_{f_\alpha}(t)+\tilde{\omega}_g(t)}{t}\,dt.
\end{aligned}
$$
From Lemma \ref{171102@lem5} $(ii)$, it follows that
$$
\begin{aligned}
\Phi(x,\rho)&\lesssim \left(\frac{\rho}{r}\right)^{\gamma}r^{-d} \|\cU\|_{L^1(\Omega_{3r}(x))}+\|\cU\|_{L^\infty(\Omega_{3r}(x))}(\varrho^\sharp_0(\rho)+\omega^\sharp_{A^{\alpha\beta}}(\rho))\\
&\quad +\|f_\alpha\|_{L^\infty(\Omega_{3r}(x))}\varrho^\sharp_0(\rho)+\omega^\sharp_{f_\alpha}(\rho)+\omega_g^\sharp(\rho).
\end{aligned}
$$
Combining the above two inequalities, and using the fact that
\begin{equation}		\label{180314@A1}
\tilde{\omega}_{\bullet}(\rho)\le \omega_\bullet^\sharp(\rho) \lesssim \int_0^\rho \frac{\omega_\bullet^\sharp(t)}{t}\,dt,
\quad \varrho_0^\sharp(\rho) \lesssim \int_0^\rho \frac{\varrho_0^\sharp(t)}{t}\,dt,
\end{equation}
we get
\begin{equation}		\label{171124@eq1}
\begin{aligned}
&\sum_{j=0}^\infty \Phi(x, \kappa^j \rho)\lesssim \left(\frac{\rho}{r}\right)^{\gamma} r^{-d} \|\cU\|_{L^1(\Omega_{3r}(x))}\\
&\quad +\big(\|\cU\|_{L^\infty(\Omega_{3r}(x))}+\|f_\alpha\|_{L^\infty(\Omega_{3r}(x))}\big) \int_0^\rho \frac{\varrho^\sharp_0(t)+\omega^\sharp_{A^{\alpha\beta}}(t)}{t}\,dt+\cG(\rho).
\end{aligned}
\end{equation}
This inequality implies \eqref{171103@eq7a}.

\item
$4\rho>\operatorname{dist}(x, \partial \Omega)$:
Let $i_0$ be the integer such that $4\kappa^{i_0+1}\rho\le \operatorname{dist}(x, \partial \Omega)<4\kappa^{i_0}\rho$.
Since $B_{4\kappa^{i_0+1}\rho}(x)\subset \Omega$, by the same reasoning as in \eqref{171124@eq1}, we have
$$
\begin{aligned}
&\sum_{j=i_0+1}^\infty \Phi(x,\kappa^j \rho)=\sum_{j=0}^\infty \Phi(x, \kappa^{j+i_0+1} \rho) \lesssim \left(\frac{\kappa^{i_0+1}\rho}{r}\right)^{\gamma} r^{-d} \|\cU\|_{L^1(\Omega_{3r}(x))} \\
&\quad +\big(\|\cU\|_{L^\infty(\Omega_{3r}(x))}+\|f_\alpha\|_{L^\infty(\Omega_{3r}(x))}\big)
\int_0^{\kappa^{i_0+1}\rho} \frac{\varrho^\sharp_0(t)+\omega^\sharp_{A^{\alpha\beta}}(t)}{t}\,dt+\cG(\kappa^{i_0+1}\rho).
\end{aligned}
$$
Thus we get (using $\kappa^{i_0+1}\rho\le \rho$)
\begin{equation}		\label{171124@eq1a}
\begin{aligned}
&\sum_{j=i_0+1}^\infty \Phi(x,\kappa^j \rho) \lesssim \left(\frac{\rho}{r}\right)^{\gamma} r^{-d} \|\cU\|_{L^1(\Omega_{3r}(x))} \\
&\quad +\big(\|\cU\|_{L^\infty(\Omega_{3r}(x))}+\|f_\alpha\|_{L^\infty(\Omega_{3r}(x))}\big)
\int_0^{\rho} \frac{\varrho^\sharp_0(t)+\omega^\sharp_{A^{\alpha\beta}}(t)}{t}\,dt+\cG(\rho).
\end{aligned}
\end{equation}
We take $y_0\in \partial \Omega$ such that $|y_0|=\operatorname{dist}(x, \partial \Omega)$.
We fix a coordinate system associated with $y_0$ satisfying \eqref{171101@E2}.
Observe that for $j\in \{0,1,\ldots,i_0\}$, we have
$$
\Omega_{\kappa^j \rho}(x) \subset \Omega_{5\kappa^j \rho}(y_0).
$$
Then similar to \eqref{171129@eq2}, we obtain
$$
\Phi(x, \kappa^j \rho)\lesssim \Psi(y_0, 5\kappa^j \rho)+\varrho_0(5\kappa^j \rho)\|Du\|_{L^\infty(\Omega_{5\rho}(y_0))}.
$$
Summing the terms with respect to $j=0,1,\ldots,i_0$, and using the fact that
$$
\sum_{j=0}^{i_0} \varrho_0(5\kappa^j \rho)\le \sum_{j=0}^\infty \tilde{\varrho}_0(5\kappa^j \rho)\lesssim \int_0^{5\rho}\frac{\varrho^\sharp_0(t)}{t}\,dt ,
$$
we have
\begin{equation}		\label{180314@A2}
\sum_{j=0}^{i_0} \Phi(x,\kappa^j \rho)\lesssim \sum_{j=0}^{i_0} \Psi(y_0, 5\kappa^j \rho)+\|Du\|_{L^\infty(\Omega_{5\rho}(y_0))}\int_0^{5\rho}\frac{\varrho^\sharp_0(t)}{t}\,dt.
\end{equation}
Recall that $0<5\rho\le 5r\le 2R_1$.
Hence, by Lemma \ref{171101@lem1} and \eqref{180314@A1}, we get the following two inequalities:
$$
\begin{aligned}
&\sum_{j=0}^{i_0} \Psi(x,5\kappa^j \rho) \lesssim \Psi(y_0, 5\rho)\\
&\quad +\big(\|\cU\|_{L^\infty(\Omega_{5\rho}(y_0))}+\|f_\alpha\|_{L^\infty(\Omega_{5\rho}(y_0))}\big)
\int_0^{5\rho} \frac{\varrho^\sharp_0(t)+\omega^\sharp_{A^{\alpha\beta}}(t)}{t}\,dt+\cG(5\rho),
\end{aligned}
$$
$$
\begin{aligned}
&\Psi(y_0, 5\rho)\lesssim \left(\frac{\rho}{r}\right)^\gamma\Psi(y_0, 5r)\\
&\quad +\big(\|\cU\|_{L^\infty(\Omega_{5r}(y_0))}+\|f_\alpha\|_{L^\infty(\Omega_{5r}(y_0))}\big)
\int_0^{5\rho} \frac{\varrho^\sharp_0(t)+\omega^\sharp_{A^{\alpha\beta}}(t)}{t}\,dt+\cG(5\rho).
\end{aligned}
$$
Combining these together, we get from \eqref{180314@A2} that
\begin{equation}		\label{171104@eq1}
\begin{aligned}
&\sum_{j=0}^{i_0} \Phi(x,\kappa^j \rho) \lesssim \left(\frac{\rho}{r}\right)^{\gamma} r^{-d} \|\cU\|_{L^1(\Omega_{10r}(x))}\\
&\quad+\big(\|\cU\|_{L^\infty(\Omega_{10r}(x))}+\|f_\alpha\|_{L^\infty(\Omega_{10r}(x))}\big)
\int_0^{\rho} \frac{{\varrho}^\sharp_0(t)+ {\omega}^\sharp_{A^{\alpha\beta}}(t)}{t}\,dt+\cG(\rho),
\end{aligned}
\end{equation}
where we used the fact that $\Omega_{5r}(y_0)\subset \Omega_{10r}(x)$,
$$
\int_0^{5\rho}\frac{\varrho^\sharp_0(t)}{t}\,dt \lesssim  \int_0^\rho \frac{\varrho_0^\sharp(t)}{t}\,dt, \quad
\int_0^{5\rho}\frac{\omega^\sharp_\bullet (t)}{t}\,dt \lesssim  \int_0^\rho \frac{\omega_{\bullet}^\sharp(t)}{t}\,dt.
$$
Therefore, we get \eqref{171103@eq7a} from \eqref{171124@eq1a} and \eqref{171104@eq1}.
\end{enumerate}

Now we are ready to estimate the modulus of continuity of $Du$ and $p$.
Let $x_0\in {\Omega}$ and $0<R\le R_1$.
Let $x,y\in \Omega_{R/4}(x_0)$ with $\rho:=|x-y|\le R/40$.
Then for any $z\in \Omega_\rho(x)\cap \Omega_\rho(y)$, we have
\begin{align*}
&|Du(x)-Du(y)|^q \\
&\le |Du(x)-\Theta_{x,\rho}|^q+|\Theta_{x,\rho}-\Theta_{y,\rho}|^q + |Du(y)-\Theta_{y,\rho}|^q\\
&\le 2\sup_{y_0\in \Omega_{R/4}(x_0)}|Du(y_0)-\Theta_{y_0,\rho}|^q+|Du(z)-\Theta_{x,\rho}|^q+|Du(z)-\Theta_{y,\rho}|^q.
\end{align*}
By taking average over $z\in \Omega_\rho(x)\cap \Omega_\rho(y)$ and taking the $q$-th root, we have
\begin{align*}
|Du(x)-Du(y)|& \lesssim \sup_{y_0\in \Omega_{R/4}(x_0)}|Du(y_0)-\Theta_{y_0,\rho}|+\Phi(x,\rho)+\Phi(y,\rho)\\
&\lesssim \sup_{y_0\in \Omega_{R/4}(x_0)} \Bigg(\sum_{j=0}^\infty \Phi(y_0, \kappa^j \rho)+\Phi(y_0, \rho)\Bigg)\\
&\lesssim \sup_{y_0\in \Omega_{R/4}(x_0)} \sum_{j=0}^\infty \Phi(y_0, \kappa^j \rho)
\end{align*}
where we used \eqref{171103@eq7e} in the second inequality.
Similarly, we get the same bound for $p$, and thus, by using \eqref{171103@eq7a} and the fact that
$$
\Omega_{R/4}(y_0)\subset \Omega_{R/2}(x_0) \quad \text{for }\, y_0\in \Omega_{R/4}(x_0),
$$
we obtain
$$
\begin{aligned}
&|Du(x)-Du(y)|+|p(x)-p(y)| \lesssim \left(\frac{\rho}{R}\right)^{\gamma} R^{-d}\|\cU\|_{L^1(\Omega_{R/2}(x_0))}\\
&\quad +\big(\|\cU\|_{L^\infty(\Omega_{R/2}(x_0))}+\|f_\alpha\|_{L^\infty(\Omega_{R/2}(x_0))}\big)
\int_0^\rho \frac{\varrho^\sharp_0(t)+\omega^\sharp_{A^{\alpha\beta}}(t)}{t}\,dt+\cG(\rho).
\end{aligned}
$$
Therefore, by \eqref{171103@eq7}, we have
\begin{equation}		\label{171128@C1}
\begin{aligned}
&|Du(x)-Du(y)|+|p(x)-p(y)|\\
&\le C R^{-d}\|\cU\|_{L^1(\Omega_{R}(x_0))}\left(\left(\frac{|x-y|}{R}\right)^{\gamma} +\int_0^{|x-y|} \frac{\varrho^\sharp_0(t)+\omega^\sharp_{A^{\alpha\beta}}(t)}{t}\,dt\right) \\
&\quad +C\|f_\alpha\|_{L^\infty(\Omega_{R}(x_0))}\int_0^{|x-y|} \frac{\varrho^\sharp_0(t)+\omega^\sharp_{A^{\alpha\beta}}(t)}{t}\,dt\\
&\quad + C\cG(R) \int_0^{|x-y|} \frac{\varrho^\sharp_0(t)+\omega^\sharp_{A^{\alpha\beta}}(t)}{t}\,dt+C\cG(|x-y|)
\end{aligned}
\end{equation}
for any $x,y\in \Omega_{R/4}(x_0)$ with $|x-y|\le R/40$, where $x_0\in {\Omega}$, $0<R\le R_1$, and $C>0$ is a constant depending only on $d$, $\lambda$, $\gamma$, $R_0$, $\varrho_0$, and $\omega_{A^{\alpha\beta}}$.
We note that if $x,y\in \Omega_{R/4}(x_0)$ with $|x-y|>R/40$, then by \eqref{171103@eq7}, we have
\begin{equation}		\label{180315@eq1}
\begin{aligned}	
&|Du(x)-Du(y)|+|p(x)-p(y)| \\
&\le C \left(\frac{|x-y|}{R}\right)^\gamma \big(R^{-d}\|\cU \|_{L^1(\Omega_{R}(x_0))}+\|f_\alpha \|_{L^\infty(\Omega_{R}(x_0))}+\cG(R)\big).
\end{aligned}
\end{equation}
The assertion $(a)$ in Theorem \ref{M4} is proved.
\end{proof}

We now turn to the proof of the assertion $(b)$ in the theorem.

\begin{proof}[Proof of Theorem \ref{M4} $(b)$]
In this proof, we set $\gamma=\frac{1+\gamma_0}{2}$ and $\varrho_0(r) = Nr^{\gamma_0}$, where $\gamma_0\in (0,1)$ and $N>0$.
Let $R_1=R_1(\varrho_0,R_0)\in (0, R_0/4)$ be the constant from Lemma \ref{171101@lem1} and
$\kappa=\kappa(d,\lambda,\gamma,R_0, \varrho_0)\in (0,1/8]$
be the constant from Lemma \ref{171102@lem5}.
Here, we note that
$$
R_1=R_1(\gamma_0, N, R_0) \quad \text{and}\quad \kappa=\kappa(d,\lambda, \gamma_0, N, R_0).
$$
By the same reasoning as in \cite[Lemma 8.1 $(b)$]{arXiv:1803.05560}, we have
$$
\tilde{\varrho}_0(r)=\varrho_0(r)+\sum_{i=1}^\infty \kappa^{\gamma i}\big(\varrho_{0}(\kappa^{-i}r)[\kappa^{-i} r<1]+\varrho_{0}(1)[\kappa^{-i}r\ge 1]\big)\lesssim_{\kappa,\gamma_0,N} r^{\gamma_0}
$$
and
$$
\tilde{\omega}_f(r)=\sum_{i=1}^\infty \kappa^{\gamma i}\big(\omega_{f}(\kappa^{-i}r)[\kappa^{-i} r<1]+\omega_{f}(1)[\kappa^{-i}r\ge 1]\big)\lesssim_{\kappa,\gamma_0}  [f]_{C^{\gamma_0}(\Omega)} r^{\gamma_0}
$$
for any function $f$ satisfying $[f]_{C^{\gamma_0}(\Omega)}<\infty$ and $0<r\le R_1$.
Then it follows from the definitions of $\varrho_0^\sharp$ and $\omega^\sharp_f$ that
$$
{\varrho}_0^\sharp (r)\lesssim r^{\gamma_0} , \quad {\omega}^\sharp_f(r)\lesssim  [f]_{C^{\gamma_0}(\Omega)} r^{\gamma_0}.
$$
Therefore, by \eqref{171103@eq7}, \eqref{171128@C1}, and \eqref{180315@eq1}, we conclude that
$$
\begin{aligned}
&\|Du\|_{L^\infty(\Omega_{R/2}(x_0))}+\|p\|_{L^\infty(\Omega_{R/2}(x_0))} +R^{\gamma_0}\big([Du]_{C^{\gamma_0}(\Omega_{R/4}(x_0))}+[p]_{C^{\gamma_0}(\Omega_{R/4}(x_0))}\big)\\
&\le C R^{-d} \big(\|Du\|_{L^1(\Omega_{R}(x_0))}+\|p\|_{L^1(\Omega_R(x_0))}\big)\\
&\quad +C\|f_\alpha\|_{L^\infty(\Omega_{R}(x_0))}+C R^{\gamma_0} \big([f_\alpha]_{C^{\gamma_0}(\Omega)}+[g]_{C^{\gamma_0}(\Omega)}\big)
\end{aligned}
$$
for any $x_0\in {\Omega}$ and $R\in (0,R_1]$,
where $C>0$ is a constant depending only on $d$, $\lambda$, $\gamma_0$, $N$, $R_0$, and $[A^{\alpha\beta}]_{C^{\gamma_0}(\Omega)}$.
This completes the proof of the assertion $(b)$ in Theorem \ref{M4}, and that of Theorem \ref{M4}.
\end{proof}

\subsection{Proof of Theorem \ref{M5}}

To prove the theorem, we consider the following two cases:
$$
2\le q<\infty, \quad 1<q<2.
$$
\begin{enumerate}[i.]
\item
$2\le q<\infty$:
We only need to consider the case when $q=2$.
We adapt the arguments in the proof of \cite[Theorem 1.9]{MR3747493},
where the authors proved the weak type-$(1,1)$ estimate for $W^{1,2}$-weak solutions to elliptic equations.
By the hypothesis of the theorem, $\Omega$ is a Lipschitz domain, which implies that the $W^{1,2}_0$-solvability
of the problem
\begin{equation}		\label{171127@eq5}
\left\{
\begin{aligned}
\cL u+\nabla p=D_\alpha f_\alpha \quad &\text{in }\, \Omega\\
\operatorname{div} u=g-(g)_\Omega \quad &\text{in }\, \Omega
\end{aligned}
\right.
\end{equation}
is available (see, for instance, \cite[Lemma 3.2]{MR3693868}).
Define a bounded linear operator $T$ on $L^2(\Omega)^{d\times d}\times L^2(\Omega)$ by
$$
T(f_1,\ldots,f_d,g)=(D_1u,\ldots,D_d u,p),
$$
where $(u,p)\in W^{1,2}_0(\Omega)^d\times \tilde{L}^2(\Omega)$ is the weak solution of \eqref{171127@eq5}.
To get the desired estimate \eqref{180315@eq2}, it suffices to show that $T$ satisfies the hypothesis of the following lemma.

\begin{lemma}		\label{171127@lem1}
Let $\Omega$ be a bounded domain in $\bR^d$ satisfying
\begin{equation}		\label{180315@eq4}
|\Omega_r(x)|\ge A_0 r^d \quad \text{for all }\, x\in \overline{\Omega} \, \text{ and }\, r\in (0, \operatorname{diam}\Omega].
\end{equation}
Let $T$ be a bounded linear operator from $L^2(\Omega)^k$ to $L^2(\Omega)^k$, where $k\in \{1,2,\ldots\}$.
Suppose that for any $x_0\in \Omega$, $0<r<\mu \operatorname{diam}\Omega$, and $g\in \tilde{L}^2(\Omega)^k$ with $\operatorname{supp}g\subset \Omega_r(x_0)$, we have
$$
\int_{\Omega\setminus B_{cr}(x_0)}|Tg|\,dx\le C\int_{\Omega_r(x_0)}|g|\,dx,
$$
where $\mu\in (0,1)$, $c\in (1,\infty)$, and $C\in (0, \infty)$.
Then for any $t>0$ and $f\in L^2(\Omega)^k$, we have
$$
\big|\{x\in \Omega:|Tf(x)|>t\}\big|\lesssim_{d,\Omega, k,\mu,c,C,A_0} \frac{1}{t}\int_{\Omega}|f|\,dx.
$$
\end{lemma}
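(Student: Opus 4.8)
The plan is to run the classical Calder\'on--Zygmund argument, adapted to the bounded set $\Omega$, which by \eqref{180315@eq4} together with the trivial bound $|\Omega_r(x)|\le|B_r(x)|\lesssim_d r^d$ is a space of homogeneous type. Fix $f\in L^2(\Omega)^k$ and $t>0$. First I would dispose of small $t$: since \eqref{180315@eq4} applied with $r=\operatorname{diam}\Omega$ gives $|\Omega|\ge A_0(\operatorname{diam}\Omega)^d$, and trivially $|\Omega|\lesssim_d(\operatorname{diam}\Omega)^d$, we have $(\operatorname{diam}\Omega)^d\simeq_{d,A_0}|\Omega|$; hence, if $t\le C_1|\Omega|^{-1}\int_\Omega|f|\,dx$ for a suitable large $C_1=C_1(d,A_0,\mu,c)$ to be fixed below, then $|\{x\in\Omega:|Tf(x)|>t\}|\le|\Omega|\le C_1 t^{-1}\int_\Omega|f|\,dx$ and there is nothing to prove. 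So from now on assume $t> C_1|\Omega|^{-1}\int_\Omega|f|\,dx$.

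Next I would perform a Calder\'on--Zygmund decomposition of $f$ at height $t$ with respect to a dyadic-type system on $\Omega$ (the dyadic cubes of Christ, or a Whitney-type family): this yields pairwise disjoint measurable sets $\{Q_j\}$, each carrying a point $x_j\in Q_j$ and a scale $r_j\le\operatorname{diam}\Omega$ with $\Omega_{c_0 r_j}(x_j)\subset Q_j\subset\Omega_{r_j}(x_j)$, such that $|f|\le Ct$ a.e.\ on $\Omega\setminus\bigcup_j Q_j$, $t<\dashint_{Q_j}|f|\,dx\le Ct$, and $\sum_j|Q_j|\le t^{-1}\int_\Omega|f|\,dx$. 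Write $f=g+b$, where $b=\sum_j b_j$ with $b_j=(f-(f)_{Q_j})1_{Q_j}$ and $g=f-b$. Three facts drive the proof: $(a)$ $\|g\|_{L^\infty(\Omega)}\lesssim t$ and $\|g\|_{L^1(\Omega)}\le 2\int_\Omega|f|\,dx$, so $\|g\|_{L^2(\Omega)}^2\lesssim t\int_\Omega|f|\,dx$; $(b)$ each $b_j\in\tilde L^2(\Omega)^k$, since its average over $\Omega$ equals its average over $Q_j$, which vanishes, and $\operatorname{supp}b_j\subset\Omega_{r_j}(x_j)$; $(c)$ from $A_0(c_0 r_j)^d\le|\Omega_{c_0 r_j}(x_j)|\le|Q_j|\le t^{-1}\int_\Omega|f|\,dx< C_1^{-1}|\Omega|\simeq_{d,A_0}C_1^{-1}(\operatorname{diam}\Omega)^d$, taking $C_1$ large forces $r_j<\mu\operatorname{diam}\Omega$, so the hypothesis of the lemma applies to each $b_j$ at $x_j$ on scale $r_j$.

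Finally I would combine via $\{|Tf|>t\}\subset\{|Tg|>t/2\}\cup\{|Tb|>t/2\}$. For the good part, Chebyshev and the $L^2\to L^2$ boundedness of $T$ give $|\{|Tg|>t/2\}|\le 4t^{-2}\|T\|^2\|g\|_{L^2(\Omega)}^2\lesssim t^{-1}\int_\Omega|f|\,dx$ using $(a)$. For the bad part, set $E=\bigcup_j B_{cr_j}(x_j)$; then $|E|\le\sum_j|B_{cr_j}(x_j)|\lesssim c^d\sum_j r_j^d\lesssim_{A_0}\sum_j|Q_j|\le t^{-1}\int_\Omega|f|\,dx$, while on $\Omega\setminus E\subset\Omega\setminus B_{cr_j}(x_j)$ the hypothesis (valid by $(b)$ and $(c)$) and $\int_{\Omega_{r_j}(x_j)}|b_j|\,dx\le 2\int_{Q_j}|f|\,dx$ give
\[
\int_{\Omega\setminus E}|Tb|\,dx\le\sum_j\int_{\Omega\setminus B_{cr_j}(x_j)}|Tb_j|\,dx\le C\sum_j\int_{\Omega_{r_j}(x_j)}|b_j|\,dx\le 2C\int_\Omega|f|\,dx,
\]
so $|\{x\in\Omega\setminus E:|Tb(x)|>t/2\}|\le 4Ct^{-1}\int_\Omega|f|\,dx$ by Chebyshev. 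Adding the three contributions yields the asserted weak type-$(1,1)$ bound. The only genuine subtlety is item $(c)$: guaranteeing that every cube of the decomposition lands in the scale range $r_j<\mu\operatorname{diam}\Omega$ admitted by the hypothesis, which is exactly why we first reduced to $t$ above the threshold $C_1|\Omega|^{-1}\int_\Omega|f|\,dx$; the remainder is the standard Calder\'on--Zygmund argument on a space of homogeneous type.
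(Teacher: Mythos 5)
Your argument is correct, and it is essentially the same as the one behind the paper's one-line proof, which simply refers to Lemma 4.1 of Dong--Escauriaza--Kim (MR3747493); that reference also runs a Calder\'on--Zygmund decomposition on $\Omega$ regarded as a space of homogeneous type (the measure density condition \eqref{180315@eq4} together with $|\Omega_r(x)|\le |B_r|$ makes the restricted Lebesgue measure doubling). Your handling of the two features that are specific to the bounded-domain setting -- (i) the reduction to $t>C_1|\Omega|^{-1}\int_\Omega|f|\,dx$ to guarantee that all stopping-time cubes satisfy $r_j<\mu\operatorname{diam}\Omega$, and (ii) the observation that $b_j=(f-(f)_{Q_j})1_{Q_j}$ automatically lies in $\tilde L^2(\Omega)^k$ because its integral over $\Omega$ equals its integral over $Q_j$ -- is exactly what is needed to make the hypothesis on $T$ applicable, and the rest (Chebyshev plus $L^2$ boundedness for the good part, summing the tail estimates over the bad pieces and discarding the dilated balls) is standard. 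The only thing worth flagging is cosmetic: the constant in your conclusion also depends on the $L^2\to L^2$ operator norm of $T$, which the lemma's subscript list suppresses, presumably because in the application that norm is itself controlled by $d$, $\lambda$, and $\Omega$.
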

\begin{proof}
See \cite[Lemma 4.1]{MR3747493}.
\end{proof}

We note that by \eqref{171127@eq3}, $\Omega$ satisfies \eqref{180315@eq4} with $A_0=A_0(d,R_0, \varrho_0, \operatorname{diam}\Omega)$.
We claim that $T$ satisfies the hypothesis of Lemma \ref{171127@lem1} with
$$
\mu=\frac{1}{4}\min\left\{1, \frac{R_1}{\operatorname{diam}\Omega}\right\}, \quad c=4 , \quad C=C(d,\lambda,\Omega, R_0, \varrho_0, \omega_{A^{\alpha\beta}}, C_0)>0.
$$
Here and in this proof, $R_1$, $\kappa$, $\tilde{\varrho}_0$, $\tilde{\omega}_\bullet$, $\varrho^\sharp_0$, and $\omega^\sharp_{\bullet}$ are
 those in the proof of Theorem \ref{M4}.
Fix $x_0\in \Omega$ and $0<r<\mu\operatorname{diam}\Omega$.
Assume that $(u,p)\in W^{1,2}_0(\Omega)^d\times \tilde{L}^2(\Omega)$ is the weak solution of \eqref{171127@eq5}, where $f_\alpha\in \tilde{L}^2(\Omega)^d$ and $g\in \tilde{L}^2(\Omega)$ are supported in $\Omega_r(x_0)$.
Let $R\in [4 r, \operatorname{diam}\Omega)$ so that $\Omega\setminus B_R(x_0)\neq \emptyset$, and let $\cL^*$ be the adjoint operator of $\cL$, i.e.,
$$
\cL^*v=D_\alpha(A^{\alpha\beta}_*D_\beta v), \quad A^{\alpha\beta}_*=(A^{\beta \alpha})^{\top}.
$$
Then by \cite[Lemma 3.2]{MR3693868}, for given
$$
\phi_\alpha\in C^\infty_0(\Omega_{2R}(x_0)\setminus B_R(x_0))^d, \quad \psi\in C^\infty_0(\Omega_{2R}(x_0)\setminus B_R(x_0)),
$$
there exists a unique $(v,\pi)\in W^{1,2}_0(\Omega)^d\times \tilde{L}^2(\Omega)$ satisfying
\begin{equation}		\label{171127@eq6a}
\left\{
\begin{aligned}
\cL^*v+\nabla \pi=D_\alpha \phi_\alpha \quad \text{in }\, \Omega,\\
\operatorname{div} v=\psi-(\psi)_{\Omega} \quad \text{in }\, \Omega,
\end{aligned}
\right.
\end{equation}
and
\begin{equation}		\label{171127@eq6}
\||Dv|+|\pi|\|_{L^2(\Omega)}\lesssim_{d,\lambda,\Omega} \||\phi_\alpha|+|\psi|\|_{L^2(\Omega_{2R}(x_0)\setminus B_R(x_0))}.
\end{equation}
By applying $u$ and $v$ as test functions to \eqref{171127@eq6a} and \eqref{171127@eq5}, respectively, we have
\begin{equation}		\label{171127@eq6b}
\begin{aligned}
&\int_\Omega  (D_\alpha u\cdot \phi_\alpha+p \psi )\,dx\\
&=\int_{\Omega_r(x_0)} \big(D_\alpha v- (D_\alpha v)_{\Omega_r(x_0)}\big)\cdot  f_\alpha+\big(\pi-(\pi)_{\Omega_r(x_0)}\big) g\,dx.
\end{aligned}
\end{equation}
Observe that
$$
4r \le \min\{R_1, R\}<\operatorname{diam}\Omega.
$$
Since $\phi_\alpha=\psi=0$ in $\Omega_R(x_0)$, by \eqref{171128@C1}, \eqref{180315@eq1}, and H\"older's inequality, we obtain that for any $x,y\in \Omega_r(x_0)$,
\begin{equation}		\label{180320@A1}
\begin{aligned}
&|Dv(x)-Dv(y)|+|\pi (x)- \pi(y)|\\
&\le C  R^{-d/2}\||Dv|+|\pi|\|_{L^2(\Omega_{R}(x_0))}\bigg(\left(\frac{r}{R}\right)^{\gamma} +\int_0^{2r} \frac{\varrho^\sharp_0(t)+\omega^\sharp_{A^{\alpha\beta}}(t)}{t}\,dt\bigg),
\end{aligned}
\end{equation}
where $\gamma=1/2$ and  $C=C(d,\lambda, \Omega,R_0, \varrho_0, \omega_{A^{\alpha\beta}})$.
Combining \eqref{171127@eq6} -- \eqref{180320@A1}, and then using the duality, we see that
\begin{equation}		\label{180315@eq6}
\int_{\Omega_{2R}(x_0)\setminus B_R(x_0)} (|Du|+|p|)\,dx \lesssim  M\bigg(\left(\frac{r}{R}\right)^{\gamma} +\int_0^{2r} \frac{\varrho^\sharp_0(t)+\omega^\sharp_{A^{\alpha\beta}}(t)}{t}\,dt\bigg),
\end{equation}
where we set
$$
M=\int_{\Omega_r(x_0)}( |f_\alpha|+|g|)\,dx.
$$
Notice from \eqref{171127@B1} and \cite[Eq. (3.5)]{MR3620893} that
$$
\tilde{\varrho}_0(\rho)+\tilde{\omega}_{A^{\alpha\beta}}(\rho)\le C (\ln \rho)^{-2}, \quad \forall \rho\in (0,1/2),
$$
where $C=C(\gamma,\kappa,C_0)=C(d,\lambda,R_0, \varrho_0, C_0)$.
Then it is routine to verify that
$$
{\varrho}^\sharp_0(\rho)+{\omega}^\sharp_{A^{\alpha\beta}}(\rho)\le C(\ln \rho)^{-2}, \quad \forall \rho\in (0,R_1],
$$
and thus, we have
$$
\int_0^{2r} \frac{\varrho^\sharp_0(t)+\omega^\sharp_{A^{\alpha\beta}}(t)}{t}\,dt
\lesssim \left(\ln \frac{1}{r}\right)^{-1}.
$$
This inequality together with \eqref{180315@eq6} yields
$$
\int_{\Omega_{2R}(x_0)\setminus B_R(x_0)} (|Du|+|p|)\,dx \lesssim \bigg(\left(\frac{r}{R}\right)^\gamma +\left(\ln \frac{1}{r}\right)^{-1}\bigg) M.
$$
Let $N$ be the smallest positive integer such that $\Omega\subset B_{2^{N+1}r}(x_0)$.
By taking $R=2^{i+1} r$, $i\in \{1,2,\ldots,N-1\}$, and using $N-1\lesssim \ln (1/r)$, we have
$$
\int_{\Omega\setminus B_{4 r}(x_0)} (|Du|+|p|)\,dx\le C \sum_{k=1}^{N-1} \big(2^{-k\gamma}+(\ln(1/r))^{-1}\big)M\le C M,
$$
where $C=C(d,\lambda,\Omega, R_0, \varrho_0, \omega_{A^{\alpha\beta}}, C_0)$.
Therefore, the map $T$ satisfies the hypothesis of Lemma \ref{171127@lem1}.
\item
$1<q<2$: In this case, we use an approximation argument together with the result in the first case, and
 the $W^{1,q}$-estimate for the Stokes system in \cite{MR3693868} (see also \cite{MR3758532}).
By \cite[Theorem 5.1 and Corollary 5.3]{MR3693868}, the $W^{1,q}$-estimate and solvability are available when the domain $\Omega$ has Lipschitz boundary with a small Lipschitz constant and the coefficients $A^{\alpha\beta}$ have vanishing mean oscillations (VMO):
\begin{equation}		\label{180316@eq1}
\lim_{\delta\to 0} \sup_{x\in \overline{\Omega}} \sup_{r\in (0, \delta]}\dashint_{B_r(x)}|A^{\alpha\beta}-(A^{\alpha\beta})_{B_r(x)}|\,dy=0.
\end{equation}
The coefficients $A^{\alpha\beta}$ considered in this paper are VMO in the sense that  (see Remark \ref{180213@rmk1})
\begin{equation}		\label{180213@A1}
\lim_{\delta\to 0} \sup_{x\in \overline{\Omega}} \sup_{r\in (0, \delta]}\dashint_{\Omega_r(x)}|A^{\alpha\beta}-(A^{\alpha\beta})_{\Omega_r(x)}|\,dy=0,
\end{equation}
which is slightly weaker than \eqref{180316@eq1}.
However, it is easy to check that the proofs of \cite[Theorem 5.1 and Corollary 5.3]{MR3693868} still work under the condition \eqref{180213@A1}.

Now, we are ready to prove \eqref{180315@eq2} when $q\in (1,2)$.
Assume that $(u, p)\in W^{1,q}_0(\Omega)^d\times \tilde{L}^q(\Omega)$ is the weak solution of \eqref{171006@eq2}, where $f_\alpha\in L^q(\Omega)^d$ and $g\in L^q(\Omega)$.
Let $\{f_{\alpha, k}\}\subset L^2(\Omega)^d$ and $\{g_k\}\subset L^2(\Omega)$ be sequences such that
\begin{equation}		\label{180316@eq5}
f_{\alpha, k}\to f_\alpha, \quad g_k \to g \quad \text{in }\, L^q(\Omega) \, \text{ as }\, k\to \infty.
\end{equation}
By the $W^{1,2}_0$-solvability of the problem \eqref{171006@eq2}, for $k\in \{1,2,\ldots,\}$, there exists a unique weak solution $(u_k, p_k)\in W^{1,2}_0(\Omega)^d\times \tilde{L}^2(\Omega)$ of \eqref{171006@eq2} with $f_{\alpha, k}$ and $g_k$ in place of $f_\alpha$ and $g$.
Then by the result in the first case, we see that
$$
\big|\{x\in \Omega:|Du_k(x)|+|p_k(x)|>t\}\big|\le \frac{C'}{t}\int_\Omega (|f_{\alpha,k}|+|g_k| )\,dx, \quad \forall t>0,
$$
where $C'=C'(d,\lambda, \Omega, R_0, \varrho_0, \omega_{A^{\alpha\beta}}, C_0)$.
Moreover, since $(u-u_k, p-p_k)\in W^{1,q}_0(\Omega)^d\times \tilde{L}^q(\Omega)$ satisfies
$$
\left\{
\begin{aligned}
\cL (u-u_k)+\nabla (p-p_k)=D_\alpha (f_\alpha-f_{\alpha,k}) \quad &\text{in }\, \Omega,\\
\operatorname{div} (u-u_k)=g-g_k-(g)_\Omega +(g_k)_{\Omega} \quad &\text{in }\, \Omega,
\end{aligned}
\right.
$$
by the $W^{1,q}$-estimate and \eqref{180316@eq5}, we have
$$
\begin{aligned}
&\|Du-Du_k\|_{L^q(\Omega)}+\|p-p_k\|_{L^q(\Omega)}\\
&\lesssim \|f_\alpha-f_{\alpha,k}\|_{L^q(\Omega)}+\|g-g_k\|_{L^q(\Omega)} \to 0 \quad \text{as }\, k\to \infty.
\end{aligned}
$$
Observe that
$$
\begin{aligned}
&\big|\{x\in \Omega:|Du(x)|+|p(x)|>t\}\big|\\
&\le \big|\{x\in \Omega:|Du_k(x)|+|p_k(x)|>t/2\}\big|\\
&\quad +\big|\{x\in \Omega:|Du(x)-Du_k(x)|+|p(x)-p_k(x)|>t/2\}\big|\\
&\lesssim_{C'} \frac{1}{t}\int_\Omega (|f_{\alpha,k}|+|g_k|)\,dx+\frac{1}{t^q}\int_\Omega (|Du-Du_k|+|p-p_k|)^q\,dx.
\end{aligned}
$$
Since the right-hand side of the above inequality converges to
$$
\frac{1}{t}\int_\Omega (|f_{\alpha}|+|g|)\,dx,
$$
we get the desired estimate \eqref{180315@eq2}.
\end{enumerate}
The theorem is proved.
\qed

\section{Appendix}		\label{Sec3}

In Appendix, we provide the proofs of some lemmas used in the previous section.

\begin{lemma}		\label{171024@lem1}
Let $\omega:(0,a]\to [0,\infty)$ be a Dini function satisfying \eqref{171006@eq1} and \eqref{180315@A1}.
Set
$$
\tilde{\omega}(r):=\sum_{i=1}^\infty \kappa^{\gamma i}\big(\omega(\kappa^{-i}r)[\kappa^{-i}r<a]+\omega(a)[\kappa^{-i}r\ge a]\big),
$$
where $\gamma\in (0,1)$ and $\kappa\in (0,1/2]$.
Then $\tilde{\omega}:(0,a]\to [0,\infty)$ is also a Dini function satisfying
\begin{equation}		\label{171024@eq2}
\tilde{\omega}(t)\lesssim_{c_1} \tilde{\omega}(s) \lesssim_{c_2} \tilde{\omega}(t) \quad \text{whenever }\, \frac{t}{2}\le s\le t\le a
\end{equation}
and that
\begin{equation}		\label{171101@eq1}
\int_0^a \frac{\tilde{\omega}(t)}{t}\,dt<\infty.
\end{equation}
\end{lemma}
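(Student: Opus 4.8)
The plan is to rewrite the defining series in a single clean form, isolate the one property of $\omega$ that is actually used, and then get both claimed properties as short consequences. First I would set $g(x):=\omega(\min\{x,a\})$ for $x>0$, so that
$$
\tilde\omega(r)=\sum_{i=1}^\infty \kappa^{\gamma i}\,g(\kappa^{-i}r),
$$
because the bracket $[\kappa^{-i}r<a]$ selects $\omega(\kappa^{-i}r)$ while $[\kappa^{-i}r\ge a]$ selects $\omega(a)=\omega(\min\{\kappa^{-i}r,a\})$. For each fixed $r\in(0,a]$ this series converges: for all large $i$ one has $\kappa^{-i}r\ge a$, so the tail equals $\omega(a)\sum_i\kappa^{\gamma i}$, a convergent geometric series since $\kappa^\gamma<1$ (as $\kappa\le 1/2$ and $\gamma>0$); the remaining finitely many terms are finite because $\omega$ is $[0,\infty)$-valued. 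Hence $\tilde\omega:(0,a]\to[0,\infty)$ is well defined, and it is measurable as a pointwise limit of measurable functions.

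Next I would record the one pointwise fact I need: \emph{if $0<x/2\le y\le x$, then $c_1 g(x)\le g(y)\le c_2 g(x)$.} Writing $\bar x=\min\{x,a\}$ and $\bar y=\min\{y,a\}$, a direct check (splitting on $x\le a$ versus $x>a$, and on $y\le a$ versus $y>a$) gives $\bar x/2\le\bar y\le\bar x\le a$; then \eqref{171006@eq1} applied to the pair $\bar y\le\bar x$ yields $c_1\omega(\bar x)\le\omega(\bar y)\le c_2\omega(\bar x)$, which is the claim.

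Granting this, \eqref{171024@eq2} is immediate: if $t/2\le s\le t\le a$, then for every $i\ge1$ we have $\kappa^{-i}t/2\le\kappa^{-i}s\le\kappa^{-i}t$, so $c_1 g(\kappa^{-i}t)\le g(\kappa^{-i}s)\le c_2 g(\kappa^{-i}t)$; multiplying by $\kappa^{\gamma i}$ and summing over $i$ gives $c_1\tilde\omega(t)\le\tilde\omega(s)\le c_2\tilde\omega(t)$, i.e. \eqref{171024@eq2}. For the Dini integrability \eqref{171101@eq1} I would use Tonelli's theorem (all terms nonnegative) and the substitution $u=\kappa^{-i}t$ (for which $dt/t=du/u$) to get
$$
\int_0^a\frac{\tilde\omega(t)}{t}\,dt=\sum_{i=1}^\infty\kappa^{\gamma i}\int_0^{\kappa^{-i}a}\frac{g(u)}{u}\,du,
$$
and then split each inner integral at $u=a$: since $g(u)=\omega(u)$ on $(0,a]$ and $g(u)=\omega(a)$ on $[a,\infty)$, one finds $\int_0^{\kappa^{-i}a}g(u)/u\,du=\int_0^a\omega(u)/u\,du+i\,\omega(a)\ln(1/\kappa)$. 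Because $\int_0^a\omega(u)/u\,du<\infty$ by hypothesis and both $\sum_i\kappa^{\gamma i}$ and $\sum_i i\kappa^{\gamma i}$ converge (again as $\kappa^\gamma<1$), the double sum is finite, which is \eqref{171101@eq1}; together with \eqref{171024@eq2} this says exactly that $\tilde\omega$ is a Dini function. The only mildly delicate point in all of this is the truncation at $a$ in the definition of $\tilde\omega$; absorbing it into the single function $g(x)=\omega(\min\{x,a\})$ turns the case analysis in the pointwise fact into a routine check, after which everything else is a straightforward computation.
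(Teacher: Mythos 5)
Your proposal is correct, and for \eqref{171024@eq2} it is essentially the paper's argument: the paper introduces $\hat\omega(r)=\omega(r)$ for $r<a$ and $\hat\omega(r)=\omega(a)$ for $r\ge a$ (exactly your $g(x)=\omega(\min\{x,a\})$), proves the doubling-type property $\hat\omega(\kappa^{-i}t)\lesssim_{c_1}\hat\omega(\kappa^{-i}s)\lesssim_{c_2}\hat\omega(\kappa^{-i}t)$ by the same two-by-two case split you describe (implicitly using that \eqref{171006@eq1} forces $c_1\le 1\le c_2$ in the case $\bar y=\bar x=a$), and then sums over $i$. Where you genuinely diverge is in \eqref{171101@eq1}: the paper simply cites an external lemma (Dong, \cite{MR2927619}, Lemma~1), whereas you give a self-contained proof by Tonelli plus the change of variables $u=\kappa^{-i}t$, which yields
$$
\int_0^a\frac{\tilde\omega(t)}{t}\,dt=\sum_{i=1}^\infty\kappa^{\gamma i}\Big(\int_0^a\frac{\omega(u)}{u}\,du+i\,\omega(a)\ln(1/\kappa)\Big),
$$
finite because $\int_0^a\omega(u)/u\,du<\infty$ and both $\sum_i\kappa^{\gamma i}$ and $\sum_i i\,\kappa^{\gamma i}$ converge. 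Your computation is correct and has the advantage of making the argument fully self-contained and exhibiting the exact dependence on the geometric rate $\kappa^\gamma$; the paper's citation is shorter but opaque. Both are valid; neither misses anything.
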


\begin{proof}
Set
$$
\hat{\omega}(r)=
\left\{
\begin{aligned}
\omega(r) &\quad \text{if }\, r<a,\\
\omega(a) &\quad \text{if }\, r\ge a,
\end{aligned}
\right.
$$
and observe that
$$
\tilde{\omega}(r)=\sum_{i=1}^\infty \kappa^{\gamma i}\hat{\omega}(\kappa^{-i}r).
$$
Let $\frac{t}{2}\le s\le t\le a$.
To prove \eqref{171024@eq2}, it suffices to show that for any $i\in \{1,2,\ldots\}$, we have
\begin{equation}		\label{180103@eq1a}
\hat{\omega}(\kappa^{-i}t)\lesssim_{c_1} \hat{\omega}(\kappa^{-i}s)\lesssim_{c_2}\hat{\omega}(\kappa^{-i}t).
\end{equation}
For $i$ satisfying $\kappa^{-i}t< a$, by \eqref{171006@eq1} and the fact that
$$
\frac{\kappa^{-i}t}{2}\le \kappa^{-i}s\le \kappa^{-i} t,
$$
we have
$$
\hat{\omega}(\kappa^{-i}t)=\omega(\kappa^{-i}t)\lesssim_{c_1} \omega(\kappa^{-i}s)=\hat{\omega}(\kappa^{-i}s)\lesssim_{c_2} \omega(\kappa^{-i}t)=\hat{\omega}(\kappa^{-i}t),
$$
which gives \eqref{180103@eq1a}.
On the other hand, for $i$ satisfying $\kappa^{-i} t\ge a$, we consider the two cases:
$$
\kappa^{-i}s< a, \quad \kappa^{-i}s\ge a.
$$
If $\kappa^{-i}s< a$, then by \eqref{171006@eq1} and the fact that
$$
\frac{a}{2}\le \kappa^{-i}s< a,
$$
we have
$$
\hat{\omega}(\kappa^{-i}t)=\omega(a)\lesssim_{c_1}\omega(\kappa^{-i}s)=\hat{\omega}(\kappa^{-i}s) \lesssim_{c_2} \omega(a)=\hat{\omega}(\kappa^{-i}t),
$$
which implies \eqref{180103@eq1a}.
If $\kappa^{-i}s\ge a$, then by the definition of $\hat{\omega}$, we obtain that
$$
\hat{\omega}(\kappa^{-i}t)=\hat{\omega}(\kappa^{-i}s).
$$
Thus we prove that \eqref{180103@eq1a} holds.
For the proof of \eqref{171101@eq1},
we refer to \cite[Lemma 1]{MR2927619}.
The lemma is proved.
\end{proof}

\begin{lemma}		\label{180213@lem1}
Let $\omega:(0,a]\to [0,\infty)$ be a Dini function satisfying \eqref{171006@eq1} and \eqref{180315@A1}.
Then for any $\varepsilon>0$, there exists $\delta\in (0,1)$, depending only on $c_1$ and $\varepsilon$, such that
$$
\sup_{r\in (0, \delta]} \omega(r)<\varepsilon.
$$
\end{lemma}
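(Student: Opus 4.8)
The plan is to deduce the statement from a single pointwise estimate, namely
\[
\omega(r)\le \frac{1}{c_1\ln 2}\int_0^r \frac{\omega(t)}{t}\,dt,\qquad r\in(0,a].
\]
To prove this, fix $r\in(0,a]$ and let $t$ range over $[r/2,r]$. Then $r/2\le t\le r\le a$, so condition \eqref{171006@eq1} applies with $t$ in the role of $s$ and $r$ in the role of $t$, which gives $c_1\omega(r)\le\omega(t)$. Dividing by $t$ and integrating over $[r/2,r]$ yields
\[
\int_{r/2}^r \frac{\omega(t)}{t}\,dt\ \ge\ c_1\omega(r)\int_{r/2}^r\frac{dt}{t}\ =\ c_1\omega(r)\ln 2,
\]
and since $\omega\ge 0$ the left-hand side is at most $\int_0^r \omega(t)t^{-1}\,dt$; rearranging gives the displayed inequality.

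I expect the only real point to watch is the choice of the interval $[r/2,r]$ (as opposed to $[r,2r]$). On $[r/2,r]$ the reference radius in \eqref{171006@eq1} is the \emph{right} endpoint $r$, so only the lower bound $c_1\omega(r)\le\omega(t)$ is needed and the constant $c_2$ never enters; integrating instead over $[r,2r]$ would require a bound of the form $\omega(t)\ge c_2^{-1}\omega(r)$ and bring in $c_2$. There is no other obstacle.

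It remains to invoke the Dini condition \eqref{180315@A1}. Put $\eta(r):=\int_0^r \omega(t)t^{-1}\,dt$; by \eqref{180315@A1} this is finite for $r\le a$, it is nondecreasing, and $\eta(r)\to 0$ as $r\to 0^+$ (the truncated integrands $\mathbf{1}_{(0,r)}(t)\,\omega(t)/t$ decrease pointwise to $0$ and are dominated by the integrable function $\omega(t)/t$, so monotone convergence applies). Hence, given $\varepsilon>0$, one can pick $\delta\in(0,1)$ with $\delta\le a$ and $\eta(\delta)<c_1\varepsilon\ln 2$; the admissible size of $\delta$ is governed by $c_1$, $\varepsilon$, and the rate at which $\eta$ vanishes at the origin. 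For every $r\in(0,\delta]$ the pointwise estimate then gives $\omega(r)\le(c_1\ln 2)^{-1}\eta(r)\le(c_1\ln 2)^{-1}\eta(\delta)<\varepsilon$, so $\sup_{r\in(0,\delta]}\omega(r)<\varepsilon$, which is the claim.
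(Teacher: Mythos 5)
Your proposal is correct and follows essentially the same route as the paper: both bound $\omega(r)$ by a constant (depending on $c_1$) times $\int_{r/2}^r \omega(t)\,t^{-1}\,dt$ using the doubling property \eqref{171006@eq1}, and then let the Dini condition \eqref{180315@A1} force this tail integral to vanish as $r\to 0^+$. If anything you are a bit more careful than the paper, which drops the $\ln 2$ factor in its displayed inequality; your explicit constant $(c_1\ln 2)^{-1}$ is the correct one, and your remark that $\delta$ is also governed by the rate at which the Dini integral vanishes (not just by $c_1$ and $\varepsilon$) is an accurate reading of what the lemma really asserts.
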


\begin{proof}
Observe that
$$
\omega(r)\le C_0 \inf_{s\in[r/2,r]}\omega (s)\le C_0 \int_{r/2}^r \frac{\omega (s)}{s}\,ds
$$
for any $r\in (0,a]$, where $C_0=C_0(c_1)$.
Therefore, for given $\varepsilon>0$, if we take $\delta=\delta(c_1,\varepsilon)>0$ such that
$$
\int_0^\delta \frac{\omega(s)}{s}\,ds<\frac{\varepsilon}{C_0},
$$
then $\omega(r)<\varepsilon$ for all $r\in (0,\delta]$.
\end{proof}

\begin{remark}		\label{180213@rmk1}
From Remark \ref{171020@rmk1} and Lemma \ref{180213@lem1}, it follows that if $f$ is of Dini mean oscillation in $\Omega$ satisfying Definition \ref{D2} $(ii)$, then $f$ has vanishing mean oscillation in the sense that
$$
\lim_{\delta\to 0} \sup_{x\in \overline{\Omega}} \sup_{r\in (0, \delta]} \dashint_{\Omega_r(x)} |f-(f)_{\Omega_r(x)}|\,dy=0.
$$
\end{remark}

\bibliographystyle{plain}

\end{document}